\newtheorem{theorem}{Theorem}[section]
\newtheorem{corollary}[theorem]{Corollary}
\newtheorem{lemma}[theorem]{Lemma}
\theoremstyle{remark}
\newtheorem{rem}[theorem]{\bf Remark}
\theoremstyle{definition}
\newtheorem{definition}[theorem]{Definition}
\theoremstyle{remark}
\newtheorem{example}[theorem]{\bf Example}
\newcommand{\dbar}{\bar\partial}
\newcommand{\im}{{\rm im\,}}
\newcommand{\dom}{{\rm Dom\,}}
\begin{document}
\title[$G$-Fredholm Property]
{Subelliptic boundary value problems and The $G$-Fredholm Property\\
{\tiny\sc  Preliminary Version}}
\author{Joe J Perez}
\address{Universit\"at Wien}
\thanks{Supported by FWF grant P19667, {\it Mapping Problems in Several 
Complex Variables}}
\thanks{MR Classification numbers: 32W05, 35H20,  43A30}
\thanks{Keywords: $\bar\partial$-Neumann Problem, Subelliptic operators}
\maketitle

\begin{abstract}Let $G$ be a unimodular Lie group, $X$ a compact manifold with boundary, and $M$ be the total space of a principal bundle $G\to M\to X$ so that $M$ is also a complex manifold satisfying a local subelliptic estimate.  In this work, we show that if $G$ acts by holomorphic transformations in $M$, then the Laplacian $\square=\bar\partial^{*}\bar\partial+\bar\partial\bar\partial^{*}$ on $M$ has the following properties:  The kernel of $\square$ restricted to the forms $\Lambda^{p,q}$ with $q>0$ is a closed, $G$-invariant subspace in $L^{2}(M,\Lambda^{p,q})$ of finite $G$-dimension.  Secondly, we show that if $q>0$, then the image of $\square$ contains a closed, $G$-invariant subspace of finite codimension in $L^{2}(M,\Lambda^{p,q})$. These two properties taken together amount to saying that $\square$ is a $G$-Fredholm operator. In similar circumstances, the boundary Laplacian $\square_b$ has similar properties.\end{abstract}

\section{Introduction}

Let $\mathcal H_1$ and $\mathcal H_2$ be Hilbert spaces and let $\mathcal B(\mathcal H_1,\mathcal H_2)$ be the space of bounded linear operators $A:\mathcal H_1\to\mathcal H_2$. An operator $A\in\mathcal B(\mathcal H_1,\mathcal H_2)$ is said to be {\it Fredholm} if first, the kernel of $A$ is finite-dimensional, and second the image of $A$ is closed and has finite codimension. An application of the open mapping theorem shows that the closedness requirement on the image is redundant. 
A well-known example of Fredholm operators due to Riesz: if $C$ is a compact operator then ${\bf 1}-C$ is Fredholm. It is easy to see that the Fredholm property is equivalent to invertiblility modulo finite-rank operators or compact operators.

The main example of such a situation is given by elliptic differential operators acting in sections of vector bundles over compact manifolds. Choosing $\mathcal H_1$ and $\mathcal H_2$ to be appropriate Sobolev spaces of these sections, we find that elliptic operators are Fredholm. 

If $M$ is a noncompact manifold (possibly with boundary) and $A$ an elliptic differential operator on $M$, then it is not necessarily the case that $A$ be Fredholm. That is, the kernel and/or cokernel of $A$ may be infinite-dimensional and/or the image of $A$ may not be closed. In particular, the index as defined above may not be well-defined, but there are notions generalizing the Fredholm property and the index. In this paper we will extend the results in \cite{P1} using one of these generalized Fredholm properties that makes sense when there is a free action of a unimodular Lie group $G$ on $M$. Making appropriate choices of metric on $M$ and in the vector bundles over $M$ and using a Haar measure on $G$, we obtain Hilbert spaces of sections on which the $G$-action is unitary. This action allows us to define a trace ${\rm Tr}_G$ in the algebra of operators commuting with the action of $G$. Restricting this trace to orthogonal projections $P_L$ onto $G$-invariant subspaces $L$ provides a dimension function $\dim_G$ given by 
\[\dim_G(L) = {\rm Tr}_G(P_L).\]
Generalizing the previous definition of the Fredholm property, a $G$-invariant operator $A:\mathcal H_1\to\mathcal H_2$ is said to be $G$-{\it Fredholm} if $\dim_G\ker A<\infty$  and if there exists a closed, invariant subspace $Q\subset {\rm im}(A)$ so that $\dim_G(\mathcal H_2\ominus Q)<\infty$. 

In \cite{P1} it was established that if $M$ is a strongly pseudoconvex complex manifold $M$ with a unimodular Lie group $G$ acting on it freely, by holomorphic transformations so that $M/G$ is compact, then, for $q>0$, the Kohn Laplacian $\square$ is $G$-Fredholm in $L^2(M,\Lambda^{p,q})$.

Here we will relax the requirement on the boundary, requiring only that each point of the boundary have a neighborhood in which a subelliptic estimate holds, as defined below. In addition, we will in detail demonstrate a similar result for the boundary Laplacian $\square_b$. The main results of this paper are.


\begin{theorem}  Let $M$ be a complex manifold with boundary satisfying a local subelliptic property. Let $G$ be a unimodular Lie group acting freely by holomorphic transformations on $M$ so that $M/G$ is compact. Then, for $q>0$, the Kohn Laplacian $\square$ in $L^2(M,\Lambda^{p,q})$ and the boundary Laplacian $\square_b$ in $L^2(bM,\Lambda^{p,q})$ are $G$-Fredholm.\end{theorem}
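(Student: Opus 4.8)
\bigskip
\noindent\emph{Proof plan.}

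The plan is to reduce both assertions to one functional-analytic criterion and then to verify its hypothesis using subelliptic regularity together with the compactness of $M/G$. The criterion is: if $A\ge 0$ is a $G$-invariant self-adjoint operator on $L^{2}(M,E)$ for a $G$-bundle $E$ with $M/G$ compact, and if the spectral projection $\mathbf 1_{[0,\delta]}(A)$ has finite $G$-trace for some $\delta>0$, then $A$ is $G$-Fredholm. Indeed, set $Q=\operatorname{ran}\mathbf 1_{(\delta,\infty)}(A)$: it is closed and $G$-invariant; $A|_{Q}$ is bounded below by $\delta$, hence invertible on $Q$, so $Q\subset\operatorname{im}A$; and $L^{2}\ominus Q=\operatorname{ran}\mathbf 1_{[0,\delta]}(A)$ has finite $G$-dimension. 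Since $\ker A\subset\operatorname{ran}\mathbf 1_{[0,\delta]}(A)$ we also get $\dim_{G}\ker A<\infty$. So everything reduces to proving $\operatorname{Tr}_{G}\mathbf 1_{[0,\delta]}(\square)<\infty$ for some $\delta>0$, and likewise for $\square_{b}$.

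First I would globalize the hypothesis. Fix a $G$-invariant Hermitian metric on $M$ (possible since $M/G$ is compact); then $G$ acts by holomorphic isometries, so a local subelliptic estimate $\|u\|_{\epsilon}^{2}\le C((\square u,u)+\|u\|^{2})$ valid in a boundary neighborhood holds, with the same constant, in all $G$-translates of that neighborhood, while in the interior the basic estimate gives the inequality with $\epsilon=1$. Pulling back a finite partition of unity from $X=M/G$ to a $G$-invariant partition of unity $\{\psi_{j}\}$ on $M$ and running the standard patching argument for subelliptic estimates (Folland--Kohn), with the commutator terms absorbed as usual, one obtains a single $G$-invariant global estimate $\|u\|_{\epsilon}^{2}\le C((\square u,u)+\|u\|^{2})$ for $u\in\operatorname{Dom}(\bar\partial)\cap\operatorname{Dom}(\bar\partial^{*})\cap\Lambda^{p,q}$, $q>0$, where $\|\cdot\|_{\epsilon}$ is the Sobolev norm attached to a $G$-invariant reference elliptic operator. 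The identical construction on the compact-quotient closed CR manifold $bM$ yields the analogous estimate for $\square_{b}$ in the degrees in which the hypothesis provides one.

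Next I would bootstrap and compute a $G$-trace. The global estimate and its higher-order companions $\|u\|_{s+\epsilon}^{2}\lesssim(\square u,u)_{s}+\|u\|_{s}^{2}$ (again obtained locally and patched) show that $(\square+1)^{-N}$ maps $L^{2}$ into $W^{2N\epsilon}$ for every $N$; fix $N$ with $2N\epsilon>\dim M$. Then $(\square+1)^{-N}$ is a positive $G$-invariant operator whose Schwartz kernel $k$ is, by Sobolev embedding up to the boundary, jointly continuous on $\overline M\times\overline M$; by $G$-invariance of $\square$ the function $x\mapsto\operatorname{tr}k(x,x)$ descends to a continuous function on the compact manifold $M/G$, whose integral is finite and equals $\operatorname{Tr}_{G}(\square+1)^{-N}$. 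The functional-calculus inequality $(\square+1)^{-N}\ge(1+\delta)^{-N}\mathbf 1_{[0,\delta]}(\square)$ then gives $\operatorname{Tr}_{G}\mathbf 1_{[0,\delta]}(\square)\le(1+\delta)^{N}\operatorname{Tr}_{G}(\square+1)^{-N}<\infty$ for every $\delta>0$, in particular for $\delta=1$; by the criterion above, $\square$ is $G$-Fredholm on $\Lambda^{p,q}$ for $q>0$. The same argument on the closed manifold $bM$ (where the Sobolev theory is the usual one) gives the statement for $\square_{b}$.

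The main obstacle is the regularity step: one must make the subelliptic estimates and all their higher-order companions genuinely $G$-uniform across the noncompact $M$, and then check that the resulting operator $(\square+1)^{-N}$ --- which near $bM$ is only subelliptic-smoothing, not produced by an elliptic parametrix --- really does have a jointly continuous Schwartz kernel on $\overline M\times\overline M$, so that cocompactness can convert ``bounded on the diagonal'' into ``finite $G$-trace.'' For $\square_{b}$ there is the preliminary, essentially bookkeeping, point of pinning down the range of $q$ for which the hypothesis supplies a subelliptic estimate, since $\square_{b}$ is never elliptic.
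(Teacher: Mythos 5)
Your proposal follows essentially the same route as the paper: globalize the local subelliptic estimate using $G$-invariance and cocompactness of $M/G$, bootstrap to genuine estimates of arbitrary Sobolev order, deduce that the low-energy spectral projection has finite $G$-trace via a kernel-on-the-quotient argument, and conclude with the spectral-projection Fredholm criterion (the paper's $\im(1-P)\subset\im\square$ with ${\rm codim}_G\,\im(1-P)<\infty$). The only, harmless, variation is at the last step: the paper shows that $P=\int_0^\delta dE_\lambda$ itself maps $H^{-s}(\bar M)\to H^{s}(M)$ and computes ${\rm Tr}_G P={\rm Tr}_G(P^{*}P)=\|K_P\|^{2}_{L^{2}((M\times M)/G)}$ directly (Lemmas \ref{kerint}, \ref{normkappa}), whereas you dominate $P\le(1+\delta)^{N}(\square+1)^{-N}$ and bound the $G$-trace of the resolvent power by integrating its continuous kernel over the diagonal in $M/G$; both versions rest on exactly the same analytic input --- uniform, genuine (not merely \emph{a priori}) subelliptic estimates up to the boundary --- which you correctly identify as the main burden of the argument.
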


\begin{rem}{\rm Natural examples of manifolds satisfying the hypotheses of the theorem are the $G$-complexifications of group actions on manifolds as constructed in \cite{HHK}. The unimodularity of $G$ is necessary for the definition of the $G$-Fredholm property.}\end{rem}

The $\dbar$-Neumann problem was proposed by Spencer in the 1950s as a method of obtaining existence theorems for holomorphic functions. Morrey in \cite{Mo} introduced the key {\it basic estimate} and the problem was solved by Kohn in \cite{K1}. We use and develop variants of the techniques in \cite{KN,FK,E} in this work as well as review other approaches to the problem. 

The generalized Fredholm property which we use was first introduced in an abstract setting by M. Breuer \cite{B}. In \cite{GHS}, it is shown that if  $\Gamma$ is taken to be a discrete group and $M$ strongly pseudoconvex, then the Kohn Laplacian $\square$ is $\Gamma$-Fredholm. Note that the natural boundary value problem for $\square$ (called the $\bar\partial$-Neumann problem) is not elliptic, but only subelliptic. In \cite{P1} we extended this result from \cite{GHS} to the situation in which $G$ is a unimodular Lie group, with the manifold still strongly pseudoconvex.

Recent works on covering spaces extending results from \cite{GHS} can be found in \cite{Br1, Br2, Br3, Br4}, some of which are related to the Shafarevich conjecture (which asserts that that the universal covering of a smooth projective variety $X$ is holomorphically convex). More geometric and analytic are \cite{TCM1} dealing with the case in which $M$ is only assumed weakly pseudoconvex. Using cohomological techniques and holomorphic Morse inequalities, the same authors obtain a lower bound for the $\Gamma$-dimension of the space of $L^2$ sections and upper bounds for the $\Gamma$-dimensions of the higher cohomology groups. In \cite{TCM2}, it is shown that the von Neumann dimension of the space of $L^2$ holomorphic sections is bounded below under curvature conditions on $M$.

This work is also connected to recent results in another direction. By Riesz, the $G$-Fredholm property of $\square$ is a natural analogue of the compactness property of the Neumann operator $N$ (the inverse of the restriction of $\square$ to $(\ker\square)^\perp$) in the case that the manifold in question is noncompact. The compactness property of $N$ has been studied extensively in settings in which $M$ is not strongly pseudoconvex. This work is a first step toward obtaining similar results on $G$-manifolds. 

In a series of papers, Catlin, Crist, Fu, McNeal, Straube and others (see the excellent reviews \cite{DK, FS}) have given sufficient conditions for, and obstacles to, compactness of $N$, the Neumann operator, assuming that $M$ is compact. In particular, in this work it is remarked compactness is a local property. Roughly speaking, the $\dbar$-Neumann operator $N_q$ on $\Omega$ is compact if and only if every boundary point has a neighborhood $U$ such that the corresponding $\dbar$-Neumann operator on $U\cap\Omega$ is compact. It would be interesting to see if or in what sense this notion generalizes.

Also, in a more speculative vein, can results analogous to those in \cite{CF} be obtained for the noncompact case by the methods of \cite{SML}?

In section 2 we will introduce the $G$-trace for invariant operators in Hilbert $G$-modules.  Section 3 contains a description of abstract $G$-Fredholm operators and several useful properties. Section 4 treats the relevant results from the theory of the $\bar \partial$-Neumann problem.  In section 5 we prove that $\square$ is $G$-Fredholm and deduce the finite-dimensionality of the reduced Dolbeault cohomology for $q>0$. We also explore some easy consequences of the main theorem regarding the operator $\bar\partial$ on functions. In the appendix there is a derivation of a local {\it a priori} estimate for the Laplacian.


\section{Preliminaries}
    
\subsection{The $G$-Fredholm property} If in a Hilbert space $\mathcal H$ there is a strongly continuous action of a group $R_s:\mathcal H\to\mathcal H$, $(s\in G)$, we denote the space of $G$-equivariant bounded linear operators in $\mathcal H$ by $\mathcal B(\mathcal H)^G$.  In other words $P\in \mathcal B(\mathcal H)^G$ if $P\in{\mathcal B}(\mathcal H)$  and  $R_{s}P=PR_{s}$ for every $s\in G$. Closed, invariant subspaces in a Hilbert space can be obtained as images of projections $P \in \mathcal B(L^2(M))^G$.  Let us restrict our attention to a group acting on itself.
       
For any $s\in G$ define left and right translations $L_s, R_s:L^2(G)\to L^2(G)$ by
    $(L_{s}u)(t) = u(s^{-1}t)$, $(R_su)(t)=u(ts)$.  For $f \in L^{1}(G)$ and $u\in L^{2}(G)$, let  
\[ (L_{f}u)(t) = \int_{G}f(s)(L_{s}u)(t)ds = \int_{G}f(s)u(s^{-1}t)ds.\]
\noindent
The set $\{L_{f}\mid f\in L^{1}(G)\}$ forms an associative algebra of bounded operators 
in $L^2(G)$ which are right-invariant ({\it i.e.} commute with right translations).  
The weak closure of this algebra, $\mathcal L_G\subset \mathcal B(L^{2}(G))$ is a von Neumann algebra by the bicommutant theorem.  We will also need to consider operators $L_{f}$ for $f\in L^{2}(G)$.  These are defined on $C^{\infty}_{c}(G)$ and we may try to extend them by continuity to $L^{2}(G)$. This is not always possible, but we will be concerned only with those $L_{f}$ 
which are bounded or can be extended to bounded linear operators in $L^{2}(G)$. 
It follows from the Schwartz kernel theorem that any bounded right-invariant operator in $L^2(G)$ can be presented in the form $L_f$ for a distribution $f$ on $G$.

We will need the following fact from about group von Neumann algebras ({\it cf.} \cite{P}, sections 5.1 and 7.2). There is a unique trace  ${\rm tr}_{G}$ on $\mathcal L_G \subset\mathcal B (L^2(G))$ agreeing with
\[ {\rm tr}_G (L_{f}^*L_{f}) = \int_G |f(s)|^2 ds, \]    
\noindent
whenever $L_{f}\in \mathcal B (L^{2}(G))$ and $f\in L^2(G).$  Furthermore, 
${\rm tr}_G(A^* A)<\infty$ if and only if there is an $f \in L^2(G)$ for which 
$A= L_{f}\in \mathcal B(L^2(G))$.  
If we define $\tilde f(t) = \bar f(t^{-1})$, and  if $f_{k}, g_{k}\in L^2(G)$, $k=1,\dots, N$, 
then the operator $A= \sum_{1}^N L_{\tilde f_k} L_{g_k}$ is in ${\rm Dom}({\rm tr}_G)$. Furthermore, $A$ takes the form $A=L_h$ with $h$ continuous and ${\rm tr}_G(L_{h}) = h(e)$.

Note that the unimodularity of the group is necessary for the trace property of ${\rm tr}_G$. 
 
Now let us consider free group actions on a manifold. Let $G$ be a Lie group and $G\to M \overset{p}\to X$ be a principal $G$-bundle with compact base $X$.  In particular, this means that we have a free right action of 
    $G$ on $M$ with quotient space $X$, and $p:M\to X$ is the canonical projection.   Having a smooth free action 
 of $G$ on a manifold $M$ with a $G$-invariant measure $d{\bf x}$, 
 and fixing a Haar measure $dt$ on $G$, we obtain a natural quotient measure $dx$ on
 $X=M/G$ which allows us to present $L^2(M)$ in the form
 \[ L^{2}(M)\cong L^{2}(G)\otimes L^{2}(X).\]
It follows that we have
a decomposition of the von Neumann algebra of bounded invariant operators
 \[ \mathcal B(L^{2}(M))^{G}\cong 
 \mathcal B(L^{2}(G))^{G}\otimes \mathcal B(L^{2}(X))\cong 
 \mathcal L_{G}\otimes \mathcal B(L^{2}(X)),\] 
where we have made the identification $\mathcal L_{G}\cong \mathcal B(L^{2}(G))^{G}$.  
In order to measure the invariant subspaces of $L^{2}(M)$, we will use a trace on
$\mathcal L_{G}\otimes \mathcal B(L^{2}(X))$. It is true that there exists a natural 
normal, faithful, semifinite trace on this algebra. It is denoted ${\rm Tr}_G$ and may be constructed as follows.

Let $(\psi_l)_{l\in\mathbb N}$ be an orthonormal basis for $L^2(X)$. Then
  \[  L^2(M) \cong L^{2}(G)\otimes L^2(X) \cong \bigoplus_{l\in \mathbb N} L^2(G)\otimes \psi_{l}.\]
    Denoting by $P_m$ the projection onto the $m^{th}$ summand, we obtain 
a matrix representation of $A\in \mathcal B(L^{2}(M))$ with elements $A_{lm} = P_l A P_m \in \mathcal B(L^{2}(G))$.  If $A\in \mathcal B(L^2(M))^G$, then these matrix elements are invariant operators in $L^{2}(G)$, and so there exist distributions $h_{lm}$ on $G$ so that $A\in \mathcal B(L^{2}(M))^{G}$ has a matrix representation
    \begin{equation}\label{deco}
    A \leftrightarrow [A_{lm}]_{lm}=[L_{h_{lm}}]_{lm}.
    \end{equation}
 For positive $A\in  \mathcal B(L^{2}(M))^{G}$ define 
    \[{\rm Tr}_G (A) = \sum_{l\in \mathbb N} \ {\rm tr}_G (A_{ll}).\]

    The functional ${\rm Tr}_G$ is a normal, faithful, and semifinite trace and is independent of the basis $(\psi_{l})_{l}$ used in its construction, {\it cf.} Section V.2 of \cite{T}. 

 Using this trace, we can define the $G$-dimension of a closed, invariant subspace $L\subset L^2(M)$ as follows. For such a subspace let $P$ be the self-adjoint projection onto $L$. Then $P$ is $G$-invariant and so we may define
 \[\dim_G L ={\rm Tr}_G P. \]
The $G$-dimension has the usual properties of a dimension if it is defined. For example, the $G$-dimension respects the orthogonal sum. Also, if $L_{1}, L_{2}, L$ are closed, invariant subspaces of $L^2(M)$ such that ${\rm dim}_{G}L_{1}>\dim_G(L\ominus L_2)$, then $L_{1}\cap L_{2}\neq \{0\}$ and ${\rm dim}_{G}L_{1}\cap L_{2} \ge {\rm dim}_{G}L_{1}-\dim_{G}(L\ominus L_{2})$. See \cite{GHS}, Lemma (2.1). 

We give now our formal definition of the $G$-Fredholm property.

   \begin{definition} Let $L_0, \ L_1$ be Hilbert spaces on which a unimodular group $G$ acts strongly continuously by unitary transformations, and let $A:L_0 \to L_1$ be a closed densely-defined linear operator commuting with the action of $G$. Such an operator is called $G$-{\it Fredholm} if the following conditions are satisfied:
\begin{itemize}
\item ${\rm dim}_G \ker  A< \infty $
\item there exists a $G$-invariant closed subspace $Q\subset L_1$ so that $Q \subset {\rm im}\ A$ and ${\rm codim}_G \ Q = {\rm dim}_G (L_1 \ominus Q) < \infty .$
\end{itemize}  
\end{definition}
\noindent
Consequences of this definition are collected in \cite{P1, S}.

\subsection{The $\dbar$-Neumann problem} Let $M$ be a complex manifold with boundary. For any integers $p,q$ with $1\leq p,q\leq n$ denote by $C^\infty(M,\Lambda^{p,q})$ the space of all $C^\infty$ forms of type $(p,q)$ on $M$, {\it i.e.} the forms which can be written in local complex coordinates $(z_1, z_2,\dots,z_n)$ as
\begin{equation}\label{pqform}\phi=\sum_{|I|=p,|J|=q}\phi_{I,J} dz^I\wedge d\bar z^J \end{equation}
\noindent
where $dz^I=dz^{i_1}\wedge\dots\wedge dz^{i_p}$, $dz^J=d\bar z^{j_1}\wedge\dots\wedge d\bar z^{j_q}$, $I=(i_1,\dots,i_p)$, $J=(j_1,\dots, j_q)$, $i_1<\dots<i_p$, $j_1<\dots<j_q$,
and the $\phi_{I,J}$ are smooth functions in local coordinates \footnote{These sums will be understood to be over increasing multiindices.}.  For such a form $\phi$, the value of $\dbar\phi$ is
\[\dbar\phi=\sum_{|I|=p,|J|=q} \sum_{k=1}^n
\frac{\partial\phi_{I,J}}{\partial\bar z^k}d\bar z^k\wedge dz^I\wedge d\bar z^J\]
\noindent
so $\dbar = \dbar|_{p,q}$ defines a linear map $\dbar:C^\infty(M,\Lambda^{p,q})\to C^\infty(M,\Lambda^{p,q+1})$. At each $p$, these maps form a complex of vector spaces
\[0 \longrightarrow \Lambda^{p,0}\stackrel{\dbar_{p,0}}{\longrightarrow}\Lambda^{p,1}
\stackrel{\dbar_{p,1}}{\longrightarrow}\dots\stackrel{\dbar_{p,q-1}}{\longrightarrow} \Lambda^{p,q}\stackrel{\dbar_{p,q}}{\longrightarrow}\dots\stackrel{\dbar_{p,n-1}}{\longrightarrow}\Lambda^{p,n}\stackrel{\dbar_{p,n}}{\longrightarrow} 0,\]
and its {\it reduced $L^2$-Dolbeault cohomology spaces} are defined by:
\[L^2\bar H^{p,q}(M)=\ker (\dbar_{p,q})/
\overline{\im (\dbar_{p,q-1})}.\]
\noindent
Since $\ker\dbar$ is a closed subspace in $L^2$, the reduced cohomology space $L^2\bar H^{p,q}(M)$ of our $G$-manifold is a Hilbert space with a strongly continuous action of $G$ by unitary transformations. 

Let us consider $\dbar$ as the maximal operator in $L^2$ and let $\dbar^\ast$ be the Hilbert space adjoint operator (this requires the introduction of boundary conditions). We will deal with several constructions involving $\dbar$ and $\dbar^*$, which we define here. First,  on the pre-domain $\mathcal D^{p,q} = \dom\dbar^*\cap C^\infty(\bar M, \Lambda^{p,q})$, define the quadratic form
\[Q(\phi,\psi) =\langle\dbar\phi,\dbar\psi\rangle + \langle\dbar^*\phi,\dbar^*\psi\rangle + \langle\phi,\psi\rangle.\]
\noindent
The object dominating the frame in this work is Kohn's Laplacian, defined by (on the $(p,q)$-forms)
\[\square=\square_{p,q}=\dbar^\ast\dbar+\dbar\dbar^\ast \quad {\rm on} \quad \dom\square\subset L^2(M,\Lambda^{p,q}).\]
From the quadratic form $Q$ we may construct the operator $F=\dbar\dbar^*+\dbar^*\dbar+1$ on its (and $\square$'s) natural domain, 
\[\dom(F_{p,q}) = \overline{\{\phi\in\mathcal D^{p,q}\mid\dbar\phi\in\mathcal D^{p,q+1}\}} \]
\noindent
where the closure is taken in the $Q$ norm, {\it i.e.} the graph norm of $\square$. The positivity of $\square$ implies that $F$ has a bounded inverse in $L^2(M)$.

\begin{lemma}\label{decomp} The following orthogonal decompositions hold:
\[ L^2(M,\Lambda^{\bullet})= \overline{\im\dbar} \oplus \ker\square \oplus \overline{\im\dbar^*} \qquad \ker\dbar = \overline{\im\dbar} \oplus \ker\square.\]\end{lemma}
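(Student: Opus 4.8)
The plan is to derive both decompositions from the single abstract fact that $\overline{\im\dbar}$ and $\overline{\im\dbar^*}$ are mutually orthogonal closed subspaces whose orthogonal complement is precisely $\ker\square$. First I would record the basic orthogonality: for $\phi,\psi$ in the appropriate smooth pre-domains, $\langle\dbar\phi,\dbar^*\psi\rangle=\langle\dbar\dbar\phi,\psi\rangle=0$ since $\dbar\dbar=0$ (and the boundary conditions defining $\dbar^*$ are exactly what makes this integration by parts legitimate). Passing to closures, $\overline{\im\dbar}\perp\overline{\im\dbar^*}$. Hence $L^2(M,\Lambda^{p,q})=\overline{\im\dbar}\oplus\overline{\im\dbar^*}\oplus\big(\overline{\im\dbar}\oplus\overline{\im\dbar^*}\big)^\perp$, and the whole content of the lemma is the identification of that last summand with $\ker\square$.

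For that identification I would argue by a double inclusion. If $\phi\in\ker\square$, then (since $\square$ arises from the quadratic form $Q-1$, or directly) $0=\langle\square\phi,\phi\rangle=\|\dbar\phi\|^2+\|\dbar^*\phi\|^2$, so $\dbar\phi=0$ and $\dbar^*\phi=0$; the first gives $\phi\perp\overline{\im\dbar^*}$ (as $\langle\phi,\dbar^*\psi\rangle=\langle\dbar\phi,\psi\rangle=0$) and the second gives $\phi\perp\overline{\im\dbar}$, so $\ker\square\subset(\overline{\im\dbar}\oplus\overline{\im\dbar^*})^\perp$. Conversely, if $\phi$ is orthogonal to both images, then $\phi\perp\im\dbar$ forces $\phi\in\dom\dbar^*$ with $\dbar^*\phi=0$, and $\phi\perp\im\dbar^*$ forces $\phi\in\dom\dbar$ with $\dbar\phi=0$; therefore $\phi\in\dom\square$ and $\square\phi=\dbar^*\dbar\phi+\dbar\dbar^*\phi=0$. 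This gives the reverse inclusion and hence the first displayed decomposition. The second decomposition follows immediately: $\ker\dbar$ contains $\overline{\im\dbar}$ (as $\dbar\dbar=0$ and $\ker\dbar$ is closed) and is orthogonal to $\overline{\im\dbar^*}$ inside $L^2$, so intersecting the three-term decomposition with $\ker\dbar$ kills the $\overline{\im\dbar^*}$ summand and leaves $\ker\dbar=\overline{\im\dbar}\oplus\ker\square$.

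The step I expect to be the genuine obstacle — and the reason this is stated as a lemma rather than a triviality — is the careful handling of domains and boundary conditions: verifying that ``$\phi\perp\im\dbar^*$'' actually places $\phi$ in $\dom\dbar$ (not merely in the domain of the maximal/formal operator), and dually that orthogonality to $\im\dbar$ puts $\phi$ in $\dom\dbar^*$ in the Hilbert-space-adjoint sense used to define $\square$. This is exactly where one uses that $\dbar$ is taken as the maximal operator and $\dbar^*$ as its Hilbert space adjoint, so that $(\im\dbar)^\perp=\ker\dbar^*$ and $(\im\dbar^*)^\perp=\ker\dbar$ hold by the general adjoint relation $\overline{\im T}^\perp=\ker T^*$, with no regularity of $bM$ required. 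Once these domain identifications are in place the computation $\square\phi=0$ is formal, and the closedness of $\ker\dbar$ (already noted in the text) together with $\ker\dbar^{*}$ closed finishes the orthogonal-sum bookkeeping. No subelliptic estimate is needed here; the lemma is purely functional-analytic.
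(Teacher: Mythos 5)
Your argument is correct: it is the standard von Neumann/weak Hodge--Kodaira decomposition, resting on the adjoint identities $(\overline{\im\,\dbar})^{\perp}=\ker\dbar^{*}$ and $(\overline{\im\,\dbar^{*}})^{\perp}=\ker\dbar^{**}=\ker\dbar$ together with $\ker\square=\ker\dbar\cap\ker\dbar^{*}$. The paper states this lemma without proof (treating it as standard, cf.\ \cite{FK}), and your write-up supplies exactly the purely functional-analytic argument that is being taken for granted, including the correct observation that no subellipticity or boundary regularity enters.
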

\noindent
In particular, we have a $G$-isomorphism of the spaces
\begin{equation}L^2\bar H^{p,q}(M)=\ker\square_{p,q}\end{equation} 
\noindent
and the obvious consequence that if $\square$ is $G$-Fredholm in $L^2(M,\Lambda^{p,q})$, then 
\[\dim_G L^2\bar H^{p,q}(M) = \dim_G \ker\square_{p,q}<\infty.\] 
Similar definitions and their consequences hold for the boundary complex, {\it cf.} Sect. V.4, \cite{FK}. 

\subsection{Sobolev spaces} We will have to describe smoothness of functions, forms, and sections of vector bundles using $G$-invariant Sobolev spaces which we describe here. The $G$ action induces an invariant Riemannian metric on $M$ so that with respect to this structure $M$ has bounded geometry. As in \cite{G, S1} we may construct appropriate partitions of unity and, with local geodesic coordinates, assemble $G$-invariant integer Sobolev spaces $H^s(M)$. Fractional Sobolev spaces may be constructed by functional-analytic means or more specifically by interpolation.

Norms corresponding to negative Sobolev spaces are defined as usual;
\[\|\phi\|_{H^{-s}(M)} = \sup\left\{\frac{|\langle\phi,\psi\rangle|}{\|\psi\|_{H^s(M)}}\mid \psi\in C^\infty_c(M)\right\}\qquad (s>0).\]

If $E$ is a vector $G$-bundle over $M$, then we may introduce a $G$-invariant inner product structure on $E$. Together with a $G$-invariant measure on $M$, we define the Hilbert spaces of sections of $E$ which we denote $H^s(M,E)$, for $s=0,1,2,\dots$. Because $X=M/G$ is compact, the spaces $H^s(M,E)$ do not depend on the choices of invariant metric on $M$ or of invariant inner product on $E$. Note that, in particular, spaces of sections in natural tensor bundles on a $G$-manifold have natural, invariant Sobolev structures.

We will also need anisotropic Sobolev spaces in neighborhoods of boundary points. Assume that our complex manifold $M$ has nonempty, smooth boundary $bM$,  $\bar M=M\cup bM$, so that $M$ is the interior of $\bar M$, and ${\rm dim}_{\mathbb C}(M)=n$. Also assume that $\bar M\subset\tilde M$, where $\tilde M$ is a complex neighborhood of $\bar M$ of the same dimension, such that $bM$ is in the interior of $\tilde M$. 

If $U$ is a sufficiently small neighborhood of a point $z\in bM$, then there exist real coordinates $x=(t_1,\dots,t_{2n-1},\rho)$ in $U$ for which $z\leftrightarrow 0$, the set $U\cap M$ corresponds to $\{x\mid \rho<0\}$, and $bM$ corresponds to $\{x\mid \rho=0\}$, {\it i.e.} $\rho$ is the defining function. Coordinates such as these in $U$ are called a {\it special boundary chart}. By means of the Fourier transform in directions tangential to the boundary, it is possible to define tangential Sobolev norms $|||\cdot|||_s$, of arbitrary real order $s$, as follows, {\it cf.} Sect. II.4 of \cite{FK}.
  
With the tangential Fourier transform in a special boundary chart
\[\tilde u(\tau,\rho)=\frac{1}{(2\pi)^{(2n-1)/2}}\int_{\mathbb R^{2n-1}}dt\ e^{-i\langle t,\tau\rangle}u(t,\rho),\]
\noindent
define for $s\in\mathbb R$, the operators 
\[\Lambda_{\bf t}^s u(t,\rho)=\frac{1}{(2\pi)^{(2n-1)/2}}\int_{\mathbb R^{2n-1}}d\tau \ e^{i\langle t,\tau\rangle}(1+|\tau|^2)^{s/2}\tilde u(\tau,\rho)\]
\noindent
(${\bf t}$ means tangential) and define the tangential Sobolev norms by
\[|||u|||_s^2 = \int_{\mathbb R^{2n-1}}d\tau\ \int_{-\infty}^0 d\rho\ (1+|\tau|^2)^s|\tilde u(\tau,\rho)|^2 .\]
\noindent
With $D^j=D^j_{\bf t}=\frac{1}{i}\frac{\partial}{\partial t_j}$ for $j=1,\dots,2n-1$ the derivatives in tangential directions and $D^{2n} = D_\rho$, define the norms
\begin{equation}\label{tangnorms}|||Du|||_s^2 = \sum_1^{2n} |||D^j u|||_s^2 + |||u|||_s^2 \approx  |||u|||_{s+1}^2 + |||D_\rho u|||_s^2.\end{equation}
For two norms $\|\cdot\|$ and $|\cdot|$, we write $|\phi|\lesssim\|\phi\|$ to mean that there exists a constant $C>0$ such that $|\phi|\le C\|\phi\|$ for $\phi$ in whatever set relevant to the context. Similarly, we will write $|\phi|\approx\|\phi\|$ to mean that $|\phi|\lesssim\|\phi\|$ and $\|\phi\|\lesssim|\phi|$. 

\section{Subelliptic Estimates}

As in \cite{K2}, if $x\in\bar M$, we say that the $\dbar$-Neumann problem for $(p,q)$-forms satisfies a {\it subelliptic estimate of order} $\epsilon$ at $x$ if there exists a neighborhood $V$ of $x$ and a constant $C>0$ such that  
\begin{equation}\label{subell}\| \phi \|_{\epsilon}^2 \le C(\|\dbar \phi\|^2 + \|\dbar^* \phi\|^2 + \| \phi\|^2 ),\end{equation}
\noindent
uniformly for $\phi\in C^\infty(V,\Lambda^{p,q})\cap\dom\dbar\cap\dom\dbar^*$. 

Denote by $\mathcal E^q(\epsilon)$ the set of points $x$ of $\bar M$ for which there exists a neighborhood $U$ of $x$ on which a subelliptic estimate of order $\epsilon$ holds. 

\begin{definition}We will say that a complex manifold $M$ satisfies a  subelliptic estimate of order $\epsilon$ if each point of the boundary does.\end{definition}

Our setting will be a complex manifold $M$ with nonempty boundary that is also the total space of a $G$-bundle
\[G\longrightarrow M\longrightarrow X\]
\noindent
with $\bar X= \bar M/G$ compact, and satisfying the condition of the definition.

\bigskip

We will provide here an important fact concerning the relation between tangential and normal differentiability of functions in an elliptic system. 

\begin{lemma}\label{k2sthing}(Prop. 3.10, \cite{K2}) Let $\epsilon>0$ and $x\in bM$. Then $x\in\mathcal E^q(\epsilon)$ if and only if there exists a neighborhood $V'$ of $x$ and a constant $C'>0$ such that 
\begin{equation}\label{subell2}|||\phi |||_{\epsilon}^2 \le C'(\|\dbar \phi\|^2 + \|\dbar^* \phi\|^2 + \| \phi\|^2)\qquad (\phi\in\mathcal D^{p,q}_{V'}).\end{equation}\end{lemma}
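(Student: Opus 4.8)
The plan is to treat the two implications separately. The implication $x\in\mathcal E^q(\epsilon)\Rightarrow\eqref{subell2}$ I would dispose of at once, taking $V'=V$ (since $x\in bM$, we may assume the neighborhood $V$ furnished by \eqref{subell} is a special boundary chart): for a form supported in such a chart one has $|||\phi|||_\epsilon\lesssim\|\phi\|_\epsilon$, because the tangential Fourier weight $(1+|\tau|^2)^\epsilon$ is dominated by the full one. Note that the two test classes coincide when $V=V'$: both consist of the $(p,q)$-forms smooth up to $bM$, supported in the chart, and lying in $\dom\dbar^*$, i.e. satisfying the $\dbar$-Neumann boundary condition.

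The substance is the converse. Here the plan is to establish, for $\phi\in\mathcal D^{p,q}_{V'}$, the \emph{normal regularity estimate}
\[ \|\phi\|_\epsilon^2 \lesssim |||\phi|||_\epsilon^2 + \|\dbar\phi\|^2 + \|\dbar^*\phi\|^2 + \|\phi\|^2; \]
substituting the hypothesis \eqref{subell2} to dominate the term $|||\phi|||_\epsilon^2$ then produces \eqref{subell} on $V=V'$, that is, $x\in\mathcal E^q(\epsilon)$. Two facts go into the normal regularity estimate. First, a subelliptic estimate necessarily has $\epsilon\le1$ (the right-hand side of \eqref{subell} is dominated by $\|\phi\|_1^2$), and for $0\le s\le1$ there is the half-space comparison $\|\phi\|_s^2\lesssim|||\phi|||_s^2+|||D_\rho\phi|||_{s-1}^2$, valid for forms supported in a special boundary chart, which I would prove by composing with a bounded reflection extension across $bM$ and invoking the elementary inequality $(1+|\tau|^2+\xi^2)^s\lesssim(1+|\tau|^2)^s+(1+|\tau|^2)^{s-1}\xi^2$ (valid for $s\le1$, with $\xi$ the frequency dual to $\rho$); cf. \S II.4 of \cite{FK}. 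It therefore suffices to bound $|||D_\rho\phi|||_{\epsilon-1}$. Second, in a local orthonormal frame $\omega_1,\dots,\omega_n$ of $(1,0)$-forms with $\omega_n=\partial\rho/|\partial\rho|$ the complex normal, the real normal derivative $D_\rho$ enters $\dbar^*\phi$ only through the coefficients of $\phi$ that contain $\bar\omega_n$, and enters $\dbar\phi$ only through the coefficients of $\phi$ that do not; hence $D_\rho\phi$ is expressible through $\dbar\phi$, $\dbar^*\phi$, purely tangential first-order derivatives of $\phi$, and $\phi$ itself. Applying $\Lambda_{\bf t}^{\epsilon-1}$ (which is bounded on $L^2$ tangentially since $\epsilon\le1$, while $\Lambda_{\bf t}^{\epsilon-1}D_{\bf t}$ has tangential order $\le\epsilon$), and using $\epsilon-1\le0$ once more to replace the tangentially-negative norms of $\dbar\phi$ and $\dbar^*\phi$ by their $L^2$ norms, gives
\[ |||D_\rho\phi|||_{\epsilon-1} \lesssim \|\dbar\phi\| + \|\dbar^*\phi\| + |||\phi|||_\epsilon + \|\phi\|, \]
and combining with the half-space comparison completes the normal regularity estimate.

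The main obstacle, and the reason I would ultimately invoke \cite{K2}, Prop.~3.10, rather than carry this out in full, is the rigorous version of the second step near $bM$: one must check that the commutators of $\Lambda_{\bf t}^{\epsilon-1}$ with the (smooth, variable, defined up to $bM$ since $V'$ is relatively compact) coefficients of $\dbar$ and $\dbar^*$ are of lower tangential order, and, more delicately, that the boundary integrals arising when the formal identity for $D_\rho\phi$ is made honest are under control; this last point is exactly where membership in $\mathcal D^{p,q}$, the vanishing of the $\bar\omega_n$-component of $\phi$ on $bM$, is used. One finally patches this boundary analysis with interior elliptic regularity for $\square$ by means of a partition of unity subordinate to the special boundary chart. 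The role of the lemma in the sequel is only to present the subelliptic estimate in the tangential form \eqref{subell2}, which is what the tangential pseudodifferential calculus used later can act on directly.
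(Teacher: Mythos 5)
Your proposal is correct and follows essentially the same route as the paper, which likewise reduces the nontrivial direction to the fact that $bM$ is noncharacteristic for $Q$: the normal derivative $D_\rho\phi$ is recovered from $\dbar\phi$, $\dbar^*\phi$, and tangential derivatives, so $|||\cdot|||_\epsilon$ control implies $\|\cdot\|_\epsilon$ control (the paper simply cites the proofs of Thm.~2.4.8 and expression 2.3.5 of \cite{FK}, resp.\ \cite{CS}, Lemmata 5.2.3--4, for the details you sketch). Your version merely makes explicit the half-space Fourier comparison and the frame decomposition that those references supply.
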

\begin{proof} This proposition is a consequence of the fact that $bM$ is noncharacteristic with respect to $Q$ (which is elliptic). See the proofs of Thm. 2.4.8 and the expression 2.3.5 in \cite{FK} or \cite{CS}, Lemmata 5.2.3--4. Roughly speaking, the ellipticity of $\square$ allows one to write the derivatives of $\phi$ in the direction normal to the boundary in terms of the tangential derivatives and other terms in $\square\phi$. The tangential derivatives of $\phi$ are of course controlled by the tangential norms, $|||\cdot|||_\epsilon$ and thus so are the normal ones. Note also that if we vary $\epsilon$, the size of the neighborhood $V$ in which the subelliptic estimate holds need not change; the constant can be changed. \end{proof}

\subsection{The basic estimate} For the following definitions, the reader is referred to \cite{FK} for details. Let $\Pi_{p,q}$ be the projection onto the space of forms of type $(p,q)$ as in Eq. \eqref{pqform}. For each $p\in bM$, the {\it Levi form} at $p$ is the Hermitian form on the $(n-1)$-dimensional space $(\Pi_{1,0}\mathbb C T_p M)\cap \mathbb C T_p bM$ given by 
\[(L_1,L_2)\mapsto\langle\partial\dbar\rho, L_1\wedge\bar L_2\rangle.\]
 A complex manifold $M$ satisfies {\it condition} $Z(q)$ if the Levi form has at least $n-q$ positive eigenvalues or at least $q+1$ negative eigenvalues at each point $p\in bM$. A strongly pseudoconvex complex manifold satisfies properties $Z(q)$ for $q=1,2,\dots,n$.
 
Let $\omega_1,\dots,\omega_n$ be a local orthonormal basis for $C^\infty(\bar M, \Lambda^{1,0})$ on the patch $U\subset\tilde M$ and let $L_1,\dots,L_n$ be dual vector fields. Then if $\phi\in C^\infty(\bar M, \Lambda^{1,0})$ has support in $U$, we may write $\phi=\sum_{IJ}\phi_{IJ}\omega^I\wedge\bar\omega^J$. In $U$ define the norm $E_U$ with
\[E_U(\phi)^2 = \sum_{IJk} \|\bar L_k \phi_{IJ} \|^2 + \int_{bM}|\phi|^2 +\|\phi\|^2.\] 
\noindent
Again by a partition of unity argument as in \cite{G, S1}, one may sum these local definitions and obtain a global norm via $E(\phi)^2 =\sum_j E_{U_j}(\phi)^2$.

That {\it the basic estimate is satisfied in} $\mathcal D^{p,q}$ means that there exists a $C>0$ such that $E(\phi)^2\le C Q(\phi,\phi)$ uniformly for $\phi\in\mathcal D^{p,q}$. 

H\"ormander in \cite{H} showed that the condition $Z(q)$ is equivalent to the basic estimate. We will need only the forward implication, proven in the compact case in \cite{FK}, Theorem 3.2.10. The proof there relies on the compactness of $M$ but we will need only local estimates, as has been exploited in \cite{E}, and uniformity as guaranteed in our case by the structure of $M$, a $G$-manifold with compact quotient $M/G$. When we say ``Suppose the basic estimate holds in $\mathcal D^{p,q}$,'' we will mean that the estimate above holds locally. 

Folland and Kohn in Theorem 2.4.4 prove (a stronger version of) the following:

\begin{theorem}\label{2.4.4} For every $p\in bM$ there is a (small) special boundary chart $V$ containing $p$ such that 
\[|||D\phi|||^2_{-1/2}\lesssim E(\phi)^2.\]
\end{theorem}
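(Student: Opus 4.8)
The plan is to reduce the assertion to a purely tangential estimate in one special boundary chart. Since the assertion is local and $E$ is already assembled from local pieces, it suffices to prove the estimate in a single special boundary chart $V$ with a constant depending only on the local geometric data, and then sum over a $G$-invariant partition of unity subordinate to a cover of the compact quotient $bM/G$ by finitely many such charts; the cutoff commutators $[\zeta,\dbar]$ and $[\zeta,\dbar^*]$ are of order zero, so they contribute only $\|\phi\|^2\le E(\phi)^2$. In $V$ I would choose the orthonormal coframe so that $\omega_n=\partial\rho/|\partial\rho|$ is conormal and $\omega_1,\dots,\omega_{n-1}$ are tangent to $bM$; then $L_j,\bar L_j$ for $j<n$ are tangential, while $\bar L_n=c\,\partial/\partial\rho+S$ with $S$ tangential of order one and $|c|$ bounded below on $V$.

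The first step is then a reduction. By the equivalence \eqref{tangnorms} with $s=-1/2$, one has $|||D\phi|||_{-1/2}^2\approx|||\phi|||_{1/2}^2+|||D_\rho\phi|||_{-1/2}^2$. Writing the normal derivative as $c^{-1}(\bar L_n-S)\phi$, using $|||S\phi|||_{-1/2}\lesssim|||\phi|||_{1/2}$ because $S$ is tangential of order one, and passing from $\bar L_n$ acting on the form $\phi$ to its action on the components $\phi_{IJ}$ at the cost of zeroth-order bundle terms, gives $|||D_\rho\phi|||_{-1/2}\lesssim\|\bar L_n\phi\|+|||\phi|||_{1/2}\lesssim E(\phi)+|||\phi|||_{1/2}$, since the $\|\bar L_n\phi_{IJ}\|$ occur in $E(\phi)$. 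Hence $|||D\phi|||_{-1/2}^2\lesssim|||\phi|||_{1/2}^2+E(\phi)^2$, and everything reduces to the tangential half-estimate $|||\phi|||_{1/2}^2\lesssim E(\phi)^2$, which is essentially \cite[Thm.\ 2.4.4]{FK}.

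For that half-estimate I would write $|||\phi|||_{1/2}^2=(\Lambda_{\bf t}^{1}\phi,\phi)$ and split $\Lambda_{\bf t}^{1}$, modulo operators of order $\le 0$, according to the $2(n-1)$ ``good'' real tangential directions spanned by $\mathrm{Re}\,\bar L_j$, $\mathrm{Im}\,\bar L_j$ ($j<n$) and the single ``characteristic'' real tangential direction $T$, complementary in $T_pbM$ to its maximal complex subspace. The good directions are handled by integration by parts in the tangential variables: the derivatives occurring there are the $\bar L_j\phi_{IJ}$ ($j<n$) already present in $E(\phi)$, and every commutator with $\Lambda_{\bf t}^s$ drops the order and is absorbed into $\|\phi\|^2$. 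The characteristic direction is the delicate point, and where I expect the main obstacle: $T$ enters $\bar L_n$ together with $\partial/\partial\rho$, and one controls it only to half a derivative, the loss being taken up precisely by $\int_{bM}|\phi|^2$ — heuristically, the modes on which $\bar L_n$ is nearly null decay exponentially into $M$ and so place their mass on $bM$, and an integration by parts in $\rho$ turns them into that boundary integral. This characteristic-direction estimate, together with the attendant bookkeeping of the sign of $\bar L_n$ relative to $\rho$, is the core of the matter and is what \cite[\S 2.4]{FK} carries out; the one thing to add in the present setting is that its constant depends only on the size of $V$ and on finitely many derivatives of the metric and the frame $\omega_j$ — uniformly bounded by bounded geometry of the $G$-invariant metric and compactness of $bM/G$ — after which the partition-of-unity argument above yields the global estimate.
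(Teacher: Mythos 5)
Your reduction is sound and this is in substance exactly the route the paper takes: the paper offers no independent proof, simply invoking Folland--Kohn, Theorem 2.4.4, whose argument you have correctly reconstructed --- splitting off the normal derivative via the noncharacteristic frame ($\bar L_n=c\,\partial_\rho+S$) using \eqref{tangnorms}, and reducing everything to the tangential half-estimate $|||\phi|||_{1/2}^2\lesssim E(\phi)^2$, whose good directions are killed by tangential integration by parts and whose characteristic direction is absorbed by the boundary integral in $E$. The one imprecision is your heuristic for that last step: the mechanism is not exponential decay of near-null modes into $M$ but simply the boundary term produced when $L_n$ (which, unlike $T$, has a normal component) is integrated by parts --- which you also mention, and which is precisely what \cite{FK}, \S 2.4 carries out.
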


It is here that one introduces the basic estimate (b.e.) as follows, 
\[|||D\phi|||^2_{-1/2}\lesssim E(\phi)^2\stackrel {\tiny\rm b.e.} \lesssim Q(\phi,\phi),\]
\noindent
yielding the local subelliptic estimate 
\[||| D\phi |||_{\epsilon-1}^2 \le C(\|\dbar \phi\|^2 + \|\dbar^* \phi\|^2 + \|\phi\|^2 ),\]
of order $\epsilon=1/2$ in a small special boundary chart $V$. For noncompact $M$ we will follow \cite{E,P1} and only assume that the basic estimate holds locally. 


\subsection{Finite type} A subelliptic estimate holds for the $\dbar$-Neumann problem near a point in the boundary of a pseudoconvex domain if and only if the D'Angelo type at the point is finite, \cite{C1,D}. In \cite{K2} the condition of ``finite ideal type'' is shown to be sufficient for subellipticity. In \cite{K3}, Kohn conjectures this condition to be equivalent to the notion of finite type due to D'Angelo. Another property, called {\it property} $(P)$ ({\it cf.} \cite{C2}), as well as McNeal's {\it property} $(\tilde P)$ \cite{Mc}, quantified in Herbig \cite{He}, give rise to subelliptic estimates. 
Michel and Shaw \cite{MS} and Henkin, Iordan, Kohn, \cite{HIK}

Straube achieved in \cite{St} the removal of the assumption of smoothness of the boundary for property $(P)$. A collection of properties on the boundary guaranteeing subelliptic estimates are collected in \cite{FS}. 


\subsection{Other criteria} The results of \cite{K2, DF} imply that a subelliptic estimate on $(p,q)$-forms is equivalent to the absence of germs of $q$-dimensional complex varieties in the boundary. Furthermore, these estimates always hold on any bounded pseudoconvex domain with real-analytic boundary.

\section{The $\dbar$-Neumann Problem}

\subsection{Interior Estimates}\label{intell} First, modify $Q$ by adding a Sobolev 1-norm:
\[Q^\delta(\phi,\phi) = Q(\phi,\phi) + \delta \|\phi\|_{H^1(M)}^2 \quad q\in (0,1].\]
It is obvious that for each $\delta>0$ we have that $\|\phi\|_1\lesssim Q^\delta(\phi,\phi)$, {\it i.e.} G\aa rding's estimate holds, as long as $\phi$ is in the natural domain of $Q^\delta$, defined as for $Q$. It follows that if $F^\delta$ is the operator induced by $Q^\delta$, (Prop. 1.3.3, \cite{FK}) then $F^\delta$ has regularity properties similar to those of $F$ in the interior of $M$, that is a (genuine) gain of two derivatives, {\it cf.} Theorem 2.2.9, \cite{FK}. Estimates of this type will not be used in the regularization procedure as they depend strongly on $\delta>0$.

Instead, one shows that a genuine estimate for the $Q^\delta$ gaining one derivative can be made to hold {\it uniformly in} $\delta$. This is Theorem 2.3.4 of \cite{FK} and it is of the following form. For $\delta>0$ let $\phi^\delta$ be the unique solution of $F^\delta\phi^\delta = \alpha$. Then the following estimates hold,
\begin{equation}\label{intest}\|\zeta\phi^\delta\|_{s+1}^2\le C_k \|\zeta_1 \alpha\|_s^2 + \|\alpha\|^2.\end{equation}
In particular, if $\alpha$ is smooth, then $\phi^\delta$ is smooth and the constants $C_k$ do not depend on $\delta$. 

\subsection{First boundary a priori estimate} 
\begin{lemma} Suppose that in $\mathcal D^{p,q}$ a subelliptic estimate of order $\epsilon>0$ holds in a special boundary chart $V$. Let $\{\zeta_k\}_0^\infty$ be a sequence of real functions in $\Lambda^{0,0}_0(V\cap \bar M)$ such that $\zeta_k=1$ on ${\rm supp}\ \zeta_{k+1}$.  Then for each integer $k\ge1$
\begin{equation}\label{tangresult}|||D\zeta_k \phi |||_{k\epsilon-1}^2 \lesssim |||\zeta_1 F\phi |||_{(k-2)\epsilon}^2 + \|F\phi\|^2.\end{equation}
\end{lemma}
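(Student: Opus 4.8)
The plan is to bootstrap from the order-$\epsilon$ subelliptic estimate \eqref{subell2} by repeatedly applying the tangential smoothing operators $\Lambda_{\bf t}^s$ and inducting on $k$. First I would record the base case $k=1$: apply Lemma \ref{k2sthing} to $\phi$ (or rather to a cutoff $\zeta_1\phi$, extending $\phi$ by zero outside $V$), obtaining $|||\zeta_1\phi|||_\epsilon^2\lesssim \|\dbar\zeta_1\phi\|^2+\|\dbar^*\zeta_1\phi\|^2+\|\zeta_1\phi\|^2$, and then commute the cutoff past $\dbar,\dbar^*$ picking up only zeroth-order error terms supported where $\zeta_1$ lives, which are absorbed by $\|F\phi\|^2$ after using that $\langle F\phi,\phi\rangle = \|\dbar\phi\|^2+\|\dbar^*\phi\|^2+\|\phi\|^2$ on the relevant domain. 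Combined with the equivalence \eqref{tangnorms} (i.e. $|||D\zeta_1\phi|||^2_{\epsilon-1}\approx |||\zeta_1\phi|||_\epsilon^2 + |||D_\rho\zeta_1\phi|||_{\epsilon-1}^2$, where the normal derivative is handled by the noncharacteristic/elliptic-regularity remark in the proof of Lemma \ref{k2sthing}, writing $D_\rho$ in terms of tangential derivatives and $F\phi$), this gives \eqref{tangresult} for $k=1$.

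For the inductive step, assume \eqref{tangresult} holds at level $k$ and prove it at level $k+1$. The idea is to apply the order-$\epsilon$ estimate of Lemma \ref{k2sthing} to the smoothed form $\Lambda_{\bf t}^{(k-1)\epsilon}\zeta_{k+1}\phi$ (up to the usual care that this smoothed object still lies in, or can be approximated in, the relevant domain $\mathcal D^{p,q}$ — this is where the tangential nature of $\Lambda_{\bf t}$ matters, since it preserves the boundary condition $\dbar^*$-domain membership because $\dbar^*$'s boundary condition is a condition on the normal component and $\Lambda_{\bf t}$ commutes with restriction to $bM$). This yields
\[|||\Lambda_{\bf t}^{(k-1)\epsilon}\zeta_{k+1}\phi|||_\epsilon^2\lesssim \|\dbar\Lambda_{\bf t}^{(k-1)\epsilon}\zeta_{k+1}\phi\|^2+\|\dbar^*\Lambda_{\bf t}^{(k-1)\epsilon}\zeta_{k+1}\phi\|^2+\|\Lambda_{\bf t}^{(k-1)\epsilon}\zeta_{k+1}\phi\|^2.\]
The left side is $\gtrsim |||\zeta_{k+1}\phi|||_{k\epsilon}^2$ minus commutator errors. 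On the right, commute $\Lambda_{\bf t}^{(k-1)\epsilon}$ and $\zeta_{k+1}$ through $\dbar$ and $\dbar^*$: the main terms become $|||\zeta_{k+1}\dbar\phi|||_{(k-1)\epsilon}$-type and $|||\zeta_{k+1}\dbar^*\phi|||_{(k-1)\epsilon}$-type expressions, which one re-expresses via the identity $\dbar\phi, \dbar^*\phi$ in terms of $F\phi$ and lower-order tangential data, eventually bounded by $|||\zeta_1 F\phi|||_{(k-1)\epsilon}^2+\|F\phi\|^2$; the commutator errors involve $[\Lambda_{\bf t}^{(k-1)\epsilon},\dbar]$ (order $(k-1)\epsilon$, one tangential derivative less than the symbol suggests after the estimate $\Lambda^s$ gains) and terms with $D\zeta_{k+1}$ supported inside $\{\zeta_k=1\}$, hence controlled by $|||D\zeta_k\phi|||_{(k-1)\epsilon -?}$ and folded into the inductive hypothesis at level $k$. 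Finally the equivalence \eqref{tangnorms} converts $|||\zeta_{k+1}\phi|||_{(k+1)\epsilon}^2$ plus the controlled normal derivative into $|||D\zeta_{k+1}\phi|||_{(k+1)\epsilon-1}^2$.

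The main obstacle is the bookkeeping of commutator terms: one must check at each stage that every error term produced by commuting $\Lambda_{\bf t}^{(k-1)\epsilon}$ or the cutoffs past $\dbar$, $\dbar^*$, and past each other either (i) has the right Sobolev order to be absorbed on the left by a small constant, or (ii) is supported where the \emph{previous} cutoff equals $1$ and so is governed by the level-$k$ estimate, or (iii) reduces to $\|F\phi\|$-type terms via the form identity for $Q$. The nesting condition $\zeta_k\equiv 1$ on ${\rm supp}\,\zeta_{k+1}$ is exactly what makes the induction close. A secondary subtlety, already flagged above, is the domain issue: $\Lambda_{\bf t}^s\zeta_{k+1}\phi$ need not be smooth up to the boundary, so one should run the argument on difference quotients or on a dense family and pass to the limit, exactly as in the proof of Theorem 2.4.8 and expression 2.3.5 of \cite{FK}; I would cite that mechanism rather than reproduce it. Uniformity of all constants across the base and fibers is automatic from the $G$-manifold structure with $M/G$ compact, as noted after Theorem \ref{2.4.4}.
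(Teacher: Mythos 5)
Your proposal is correct and follows essentially the same route as the paper, whose proof simply converts the hypothesis via Lemma \ref{k2sthing} into the form $|||D\zeta\phi|||_{\epsilon-1}\lesssim Q(\zeta_1\phi,\zeta_1\phi)$ and then cites the tangential induction of Kohn--Nirenberg, Sect.\ 6.4 --- precisely the $\Lambda_{\bf t}$-smoothing and commutator-bookkeeping argument you sketch (and which the paper itself carries out in detail in the proof of Lemma \ref{biglem}). The one slip is an index normalization in your inductive step: to pass from level $k$ to level $k+1$ you should smooth by $\Lambda_{\bf t}^{k\epsilon}$, so that $|||\Lambda_{\bf t}^{k\epsilon}\zeta_{k+1}\phi|||_{\epsilon}\approx|||\zeta_{k+1}\phi|||_{(k+1)\epsilon}$, rather than by $\Lambda_{\bf t}^{(k-1)\epsilon}$, which only recovers tangential order $k\epsilon$ on the left-hand side.
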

\begin{proof} An application of Lemma \ref{k2sthing} converts the assumption to  
\begin{equation}\label{workingdef}|||D\zeta\phi|||_{\epsilon-1}\lesssim Q(\zeta_1\phi,\zeta_1\phi).\end{equation}
\noindent
In Sect. 6.4 of \cite{KN}, it is shown that \eqref{workingdef} implies the tangential estimates \eqref{tangresult}.\end{proof}

\begin{corollary}\label{englis3.1} (\cite{E}, Eq. 3.1) Suppose that in $\mathcal D^{p,q}$ a subelliptic estimate of order $\epsilon>0$ holds in a special boundary chart $V$. Then we obtain {\it a priori} estimates for each integer $k\ge1$,
\begin{equation}\label{firstap}\|\zeta\phi\|_{k\epsilon}^2\lesssim \|\zeta_1 F\phi \|_{(k-2)\epsilon}^2 + \|F\phi\|^2, \quad(\phi\in C^\infty(V,\Lambda^{p,q})\cap\dom F).\end{equation}\end{corollary}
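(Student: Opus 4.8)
The plan is to obtain the estimate \eqref{firstap} by decomposing $\phi$ into a tangential part and a normal part near the boundary, combining the tangential {\it a priori} estimate \eqref{tangresult} just proven with the interior elliptic regularity of $F$ from Section \ref{intell}, and then upgrading the mixed tangential/isotropic control into full isotropic Sobolev control via the noncharacteristic argument already invoked in Lemma \ref{k2sthing}. Concretely, since the statement is local in the special boundary chart $V$, I would fix the cutoffs $\zeta = \zeta_{k+1}$, $\zeta_1 = \zeta_k$, etc., with $\zeta_j \equiv 1$ on $\mathrm{supp}\,\zeta_{j+1}$, so that products like $\zeta F\phi$ may be replaced by $\zeta F(\zeta_1 \phi)$ up to lower-order commutator terms supported where $\zeta_1$ already dominates.

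The key steps, in order: (1) From \eqref{tangresult} we already have $|||D\zeta_k\phi|||_{k\epsilon-1}^2 \lesssim |||\zeta_1 F\phi|||_{(k-2)\epsilon}^2 + \|F\phi\|^2$; by \eqref{tangnorms} this controls $|||\zeta_k\phi|||_{k\epsilon}^2$ together with $|||D_\rho \zeta_k\phi|||_{k\epsilon-1}^2$, i.e.\ one full tangential derivative plus one normal derivative at tangential order $k\epsilon-1$. (2) Next I convert the mixed norm into a genuine isotropic norm $\|\zeta\phi\|_{k\epsilon}^2$. This is exactly the passage used in the proof of Lemma \ref{k2sthing}: because $bM$ is noncharacteristic for the elliptic operator $Q$ (equivalently for $F = \square + 1$), the normal derivatives of $\phi$ of all orders are expressible, via the equation $F\phi = \alpha$, in terms of tangential derivatives of $\phi$ and of $\alpha$. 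Iterating this identity lets one trade each missing normal derivative for tangential ones at the cost of derivatives falling on $F\phi$, which is why the right-hand side carries $\|\zeta_1 F\phi\|_{(k-2)\epsilon}$ (two orders lower) rather than $\|\zeta_1 F\phi\|_{(k-1)\epsilon}$. (3) Finally, handle the commutators: each time a cutoff is commuted past $\dbar$ or $\dbar^*$ one picks up a term of one order lower supported in the region where the next-larger cutoff is identically $1$, and these are absorbed into the terms $\|\zeta_1 F\phi\|_{(k-2)\epsilon}^2$ and $\|F\phi\|^2$ after a standard interpolation/small-constant argument. Combining (1)--(3) gives \eqref{firstap}; the restriction to $\phi\in C^\infty(V,\Lambda^{p,q})\cap\dom F$ is what makes all these manipulations legitimate, with the estimate then surviving passage to the closure in the graph norm.

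The main obstacle I expect is step (2), the noncharacteristic bootstrapping that converts tangential regularity into full Sobolev regularity while keeping track of exactly how many derivatives land on $F\phi$. One must be careful that the recursion on $k$ closes: the estimate at level $k$ should only require $F\phi$ controlled at level $(k-2)\epsilon$, so the inductive structure must be set up so that reading off normal derivatives from the equation never costs more than two orders, and that the chain of cutoffs $\zeta_0 \supset \zeta_1 \supset \cdots$ is long enough to absorb all commutator terms generated along the way. Uniformity of the implied constants over the $G$-orbit is automatic here because everything takes place in a single special boundary chart $V$ and $M/G$ is compact, so only finitely many chart types occur; this is the same observation already used for the basic estimate in Section 3.
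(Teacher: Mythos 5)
Your proposal follows essentially the same route as the paper's proof: take the tangential estimate \eqref{tangresult}, use the noncharacteristic/elliptic bootstrapping behind Lemma \ref{k2sthing} to pass from $|||D\zeta_k\phi|||_{k\epsilon-1}$ to the isotropic norm $\|\zeta_k\phi\|_{k\epsilon}$, and dominate $|||\zeta_1 F\phi|||_{(k-2)\epsilon}$ by $\|\zeta_1 F\phi\|_{(k-2)\epsilon}$ since the tangential norm involves no normal differentiation. The extra care you take with commutators and the cutoff chain is already absorbed into the lemma producing \eqref{tangresult}, so nothing further is needed.
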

\begin{proof} The estimate \eqref{tangresult} implies the result because $||D\zeta_k \phi ||_{k\epsilon-1} \approx |||D\zeta_k \phi |||_{k\epsilon-1}$ by Lemma \ref{k2sthing} and $||D\zeta_k \phi ||_{k\epsilon-1} \approx||\zeta_k \phi ||_{k\epsilon}$ obviously. Next, $|||\zeta_1 F\phi |||_{(k-2)\epsilon} \lesssim \|\zeta_1 F\phi \|_{(k-2)\epsilon}$ since the smaller norm doesn't differentiate in the normal direction and the tangential directions' order is the same.\end{proof}

These boundary estimates can be summed with the interior estimates \eqref{intest} (invoking uniformity) so that \eqref{firstap} holds for $\phi$ with support inside $M$, thus for each integer $k\ge1$,
\begin{equation}\label{bigap1}\|\zeta\phi\|_{k\epsilon}^2\lesssim \|\zeta_1 F\phi \|_{(k-2)\epsilon}^2 + \|F\phi\|^2, \quad(\phi\in C^\infty(\bar M,\Lambda^{p,q})\cap\dom F).\end{equation}
%

\subsection{Second boundary a priori estimate} This subsection is a small modification of the appendix in \cite{P1} and Sect. 6.4 of \cite{KN}. Our goal is a version of the $\epsilon=1/2$ estimate, Lemma 7.9 \cite{P1},
\[|||D\zeta_k \phi|||^2_{(k-2)/2}\lesssim \|\zeta_0F \phi\|^2_{(k-2)/2}+\|\zeta_0 \phi\|^2, \quad (\phi\in C^\infty(M,\Lambda^{p,q})\cap\dom F).\]
The interior estimates from subsection \ref{intell} will be used unchanged here. 

We will systematically label sequences of real-valued, cutoff functions $(\zeta_k)_k\subset C^\infty_c(M)$ such that $\zeta_{k}|_{{\rm supp}(\zeta_{k+1})}=1$ for $k=0,1,2,\dots$.
\begin{lemma}\label{cutFolland} Let $r>0$, $U$ be a special boundary chart and let $\zeta,\zeta_0,\zeta_1$ be real-valued functions in $C^\infty_c(U)$ with $\zeta_1=1$ on ${\rm supp}(\zeta)$ and $\zeta_0=1$ on ${\rm supp}(\zeta_1)$. Then for $A=\zeta_1\Lambda_{\bf t}^r\zeta$ and for $A'$ the formal adjoint of $A$ with respect to the inner product on $L^2(M)$,
\[ Q(A \phi,A \phi)-\mathfrak{Re}\ Q( \phi,\zeta_0 A'A \phi) = \mathcal O(|||D\zeta_0 \phi|||_{r-1}^2)\]
\[Q(\zeta \phi,\zeta \phi)-\mathfrak{Re}\ Q( \phi,\zeta_0\zeta^2 \phi) = \mathcal O(\|\zeta_0 \phi\|^2),\]
\noindent
uniformly for $\phi\in\mathcal D^{p,q}\cap\Lambda_0^{p,q}(U\cap\bar M)$.\end{lemma}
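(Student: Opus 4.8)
The plan is to treat both identities as commutator estimates: the operator $A=\zeta_1\Lambda_{\bf t}^r\zeta$ (resp. multiplication by $\zeta$) almost commutes with the first-order operators $\dbar$ and $\dbar^*$, the error being a tangential operator of order $\le r-1$ (resp. order $\le -1$, i.e. bounded) supported where $\zeta_0=1$. I would begin by expanding the two sides of the first identity term by term using the definition $Q(\psi,\psi)=\|\dbar\psi\|^2+\|\dbar^*\psi\|^2+\|\psi\|^2$. For the $\dbar$-term one writes $\dbar A\phi = A\dbar\phi + [\dbar,A]\phi$; since $A'A$ is selfadjoint, $\mathfrak{Re}\langle\dbar A\phi,\dbar A\phi\rangle - \mathfrak{Re}\langle\dbar\phi,\dbar(\zeta_0 A'A\phi)\rangle$ collapses, after moving one $A$ across $\dbar$ on each side, to a sum of pairings each containing at least one factor of a commutator $[\dbar,A]$, $[\dbar^*,A]$, or $[\cdot,\zeta_0]$. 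The zero-order term $\langle A\phi,A\phi\rangle-\mathfrak{Re}\langle\phi,\zeta_0 A'A\phi\rangle$ vanishes up to a commutator $[\zeta_0,A'A]$ because $\zeta_0=1$ on the support of everything appearing in $A'A$.

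The key computational input is the structure of these commutators. Because the special boundary chart gives coordinates $(t_1,\dots,t_{2n-1},\rho)$ with $\rho$ the defining function, and $\dbar,\dbar^*$ are first order, each of $[\dbar,\Lambda_{\bf t}^r]$ and $[\dbar^*,\Lambda_{\bf t}^r]$ is a tangential pseudodifferential operator of order $r$ whose principal part is itself order $r-1$ after accounting for the cancellation against the leading symbol — more precisely $[\dbar,\Lambda_{\bf t}^r]$ differentiates transversally only through the cutoffs, which are smooth, so on the nose one gets a bound $|||[\dbar,A]\phi|||_0 \lesssim |||D\zeta_0\phi|||_{r-1}$, and similarly for $\dbar^*$. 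The cutoff commutators $[\dbar,\zeta]$, $[\dbar,\zeta_0]$ are order zero and supported in the region where $\zeta_0\equiv1$, so they contribute $\mathcal O(\|\zeta_0\phi\|^2)\subset\mathcal O(|||D\zeta_0\phi|||_{r-1}^2)$. Assembling these via Cauchy–Schwarz and the arithmetic–geometric mean inequality, and absorbing the one genuinely dangerous term (the pairing of $\dbar A\phi$ with $[\dbar,A]\phi$, which a priori looks like $|||A\phi|||_\bullet\cdot|||D\zeta_0\phi|||_{r-1}$) — here one uses that $A\phi$ is itself supported where $\zeta_0=1$ and that $|||A\phi|||_0\lesssim|||D\zeta_0\phi|||_{r-1}$ plus lower order, so the product is $\mathcal O(|||D\zeta_0\phi|||_{r-1}^2)$ — gives the first estimate. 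The second estimate is the special case $r=0$, $\Lambda_{\bf t}^0=1$, where every commutator is a smooth zero-order multiplier supported in $\{\zeta_0=1\}$, so the right side degenerates to $\mathcal O(\|\zeta_0\phi\|^2)$.

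The step I expect to be the main obstacle is bookkeeping the error terms honestly near the boundary, since $A$ involves $\Lambda_{\bf t}^r$ which is nonlocal in the tangential variables and the integration by parts that moves $\dbar^*$ back onto $A\phi$ produces boundary terms on $bM$. One must check that these boundary contributions are either absent (because $\phi\in\mathcal D^{p,q}$ satisfies the $\dbar$-Neumann boundary condition, which is preserved by the tangential operator $A$ since $A$ does not differentiate in $\rho$) or controlled by $E(\phi)$-type boundary norms already dominated by $|||D\zeta_0\phi|||_{r-1}$; this is exactly the reason the lemma is phrased for $\phi$ in the pre-domain $\mathcal D^{p,q}$ and localized to $U\cap\bar M$. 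Once the boundary terms are disposed of, the remaining argument is the standard integration-by-parts-and-commute bookkeeping from Sect. 6.4 of \cite{KN} and the appendix of \cite{P1}, carried out with the cutoffs arranged in the nested pattern $\zeta_0=1$ on $\mathrm{supp}\,\zeta_1$, $\zeta_1=1$ on $\mathrm{supp}\,\zeta$, so that every error lands in a region where the next cutoff out is identically $1$.
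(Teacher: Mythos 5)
Your argument is correct and is essentially the paper's: the proof given there simply cites Folland--Kohn Lemmas 2.4.2--2.4.3 together with the fact that cutoffs (and tangential operators) preserve $\mathcal D^{p,q}$, and what you have written out is precisely the commutator/integration-by-parts content of those cited lemmas. The one loose point is your treatment of the dangerous pairing: the factor to be controlled there is $\|\dbar A\phi\|$, not $|||A\phi|||_0$ (and the second identity is not literally the $r=0$ case of the first, since $|||D\zeta_0\phi|||_{-1}$ is not dominated by $\|\zeta_0\phi\|$), but the small-constant absorption into $Q(A\phi,A\phi)$ that you invoke is exactly the standard fix.
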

\begin{proof}These are simple consequences of the fact that the domain $\mathcal D^{p,q}$ of $\vartheta$ is preserved under the application of a cutoff function ({\it cf.} 2.3.2 of \cite{FK}) and lemmata 2.4.2 and 2.4.3 of \cite{FK} applied to $\zeta_0 u$. \end{proof}

\noindent
If we assume further that $\phi\in {\rm Dom}(F)$, ({\it cf.} \cite{FK}, Prop. 1.3.5) we may write
\begin{align}\label{locs} Q(A \phi,A \phi)-\mathfrak{Re}\ \langle\zeta_0F \phi,A'A \phi\rangle = \mathcal O(|||D\zeta_0 \phi|||_{r-1}^2)\\
Q(\zeta \phi,\zeta \phi)-\mathfrak{Re}\ \langle\zeta_0F \phi,\zeta^2 \phi\rangle = \mathcal O(\|\zeta_0 \phi\|^2).\end{align}

The following is our replacement of Lemma 7.9 of \cite{P1}, utilizing the subelliptic estimate, {\it cf.} \cite{KN}, p 472, \cite{FK}, Lemma 2.4.6. Note the similarity to Prop 3.1.11, \cite{FK}, proven with the aid of compactness.
 

\begin{lemma}\label{biglem} Suppose that in $\mathcal D^{p,q}$ a subelliptic estimate of order $\epsilon>0$ holds in a special boundary chart $V$. Let $\{\zeta_k\}_0^\infty$ be a sequence of real functions in $\Lambda^{0,0}_0(V\cap \bar M)$ such that $\zeta_k=1$ on ${\rm supp}\ \zeta_{k+1}$.  Then for each integer $k\ge1$, 
\begin{equation}\label{aag}\|\zeta_k \phi\|^2_{k\epsilon}\lesssim \|\zeta_0F \phi\|^2_{(k-2)\epsilon}+\|\zeta_0 \phi\|^2.\end{equation}
\noindent
uniformly for $\phi\in {\rm Dom}(F)\cap\mathcal D^{p,q}$.\end{lemma}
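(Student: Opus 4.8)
The plan is to prove \eqref{aag} by induction on $k$, following the structure of Lemma 7.9 in \cite{P1} and Sect. 6.4 of \cite{KN}, but replacing every appeal to the $\epsilon=1/2$ basic estimate by the subelliptic estimate of order $\epsilon$, which by Lemma \ref{k2sthing} we may use in the tangential form \eqref{workingdef}, namely $|||D\zeta\phi|||_{\epsilon-1}^2\lesssim Q(\zeta_1\phi,\zeta_1\phi)$. The base case $k=1$ says $\|\zeta_1\phi\|_\epsilon^2\lesssim\|\zeta_0 F\phi\|_{-\epsilon}^2+\|\zeta_0\phi\|^2$, which follows by inserting $r=\epsilon$ into the second identity of \eqref{locs} and the first with $A=\zeta_1\Lambda_{\bf t}^0\zeta$ (or directly from \eqref{workingdef} applied to $\zeta_1\phi$ together with $Q(\zeta_1\phi,\zeta_1\phi)\lesssim|\langle\zeta_0F\phi,\zeta_1^2\phi\rangle|+\|\zeta_0\phi\|^2\lesssim\|\zeta_0F\phi\|_{-\epsilon}\|\zeta_1^2\phi\|_\epsilon+\|\zeta_0\phi\|^2$ and absorbing), noting that $\|\zeta_k\phi\|_{k\epsilon}\approx|||D\zeta_k\phi|||_{k\epsilon-1}$ holds on the elliptic system by Lemma \ref{k2sthing}.

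For the inductive step, assume \eqref{aag} up to $k-1$ and estimate $\|\zeta_k\phi\|_{k\epsilon}^2$. Using $\|\zeta_k\phi\|_{k\epsilon}\approx|||D\zeta_k\phi|||_{k\epsilon-1}$, it suffices to bound the tangential quantity; apply \eqref{workingdef} to $A\phi$ with $A=\zeta_k\Lambda_{\bf t}^{(k-1)\epsilon}\zeta_{k+1}$ — the point of the $\Lambda_{\bf t}$ factor is that differentiating $A\phi$ once tangentially in the $\epsilon-1$ norm lands at total order $k\epsilon-1$. This gives $|||D\zeta_k\phi|||_{k\epsilon-1}^2\lesssim Q(A\phi,A\phi)$ up to lower order, and now the first identity of \eqref{locs} with $r=(k-1)\epsilon$ converts $Q(A\phi,A\phi)$ to $\mathfrak{Re}\langle\zeta_0F\phi,A'A\phi\rangle+\mathcal O(|||D\zeta_0\phi|||_{(k-1)\epsilon-1}^2)$. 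The error term $|||D\zeta_0\phi|||_{(k-1)\epsilon-1}^2\approx\|\zeta_0\phi\|_{(k-1)\epsilon}^2$ is exactly what the inductive hypothesis (applied with a relabelled cutoff sequence so that $\zeta_0$ here plays the role of $\zeta_{k-1}$ there) controls by $\|\zeta_0F\phi\|_{(k-3)\epsilon}^2+\|\zeta_0\phi\|^2$, which is dominated by the right side of \eqref{aag}. The main term is handled by moving half of $A'A=\zeta_{k+1}\Lambda_{\bf t}^{2(k-1)\epsilon}\zeta_k$ onto $\zeta_0F\phi$ via the self-adjointness of $\Lambda_{\bf t}$ and a commutator argument: $|\langle\zeta_0F\phi,A'A\phi\rangle|\lesssim\|\zeta_0F\phi\|_{(k-2)\epsilon}\,\|\zeta_k\phi\|_{k\epsilon}+(\text{commutator terms of lower tangential order})$, and the product is absorbed by the left side after a small-constant/large-constant split, while the commutator terms again fall under the inductive hypothesis.

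I expect the main obstacle to be the careful bookkeeping in the commutator/integration-by-parts step that distributes the operator $A'A$ so as to land exactly $(k-2)\epsilon$ derivatives on $F\phi$ and $k\epsilon$ on $\phi$ — one must verify that commuting $\Lambda_{\bf t}^{2(k-1)\epsilon}$ past the cutoffs and past $\zeta_0F\phi$ produces only terms of strictly lower tangential order (hence covered by induction) and that the normal-direction discrepancy is benign because $F$ is elliptic and $bM$ noncharacteristic (Lemma \ref{k2sthing} again). A secondary subtlety, exactly as in \cite{KN,P1}, is that $A\phi$ and $\zeta\phi$ must be verified to stay in $\mathcal D^{p,q}$ so that \eqref{workingdef} and the identities of Lemma \ref{cutFolland} apply; this is the content of the cutoff-invariance of $\dom\dbar^*$ cited in the proof of Lemma \ref{cutFolland}, and for the $\Lambda_{\bf t}^r$-regularized versions one approximates by $\Lambda_{\bf t}^r$ with a cutoff in frequency (or uses the difference-quotient version) so that everything remains genuinely in the domain, passing to the limit at the end. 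All constants are uniform in the relevant patch because $M/G$ is compact, so summing the local estimates over a finite invariant cover is routine and not an obstacle.
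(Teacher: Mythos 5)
Your proposal follows essentially the same route as the paper: induction on $k$, with the base case obtained from the subelliptic estimate via Lemma \ref{cutFolland}, the generalized Schwarz inequality and a small-constant absorption, and the inductive step carried out by applying the estimate to $A\phi$ for a tangential operator $A=\zeta\Lambda_{\bf t}^{(k-1)\epsilon}\zeta'$, converting $Q(A\phi,A\phi)$ to $\mathfrak{Re}\,\langle F\phi,A'A\phi\rangle$ plus commutator errors controlled by the inductive hypothesis, and passing between $\|\zeta_k\phi\|_{k\epsilon}$ and $|||D\zeta_k\phi|||_{k\epsilon-1}$ via Lemma \ref{k2sthing} — exactly the Kohn--Nirenberg scheme the paper cites. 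The only differences are cosmetic (your indexing of the cutoffs sandwiching $\Lambda_{\bf t}$, requiring a harmless relabelling of the nested family), so the proposal is correct and matches the paper's argument.
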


\begin{proof}Assuming the subelliptic estimate and noting that multiplication by $\zeta_1$ preserves $\mathcal D^{p,q}$, we have
\[\|\zeta_1 \phi\|_{\epsilon}^2\lesssim Q(\zeta_1 \phi,\zeta_1 \phi),\qquad  \phi\in\mathcal D^{p,q}\cap \Lambda_0^{p,q}(V\cap\bar M).\]

\noindent
If we insert a real-valued cutoff function $\zeta_0$ equal 1 on the support of $\zeta_1$ and apply Lemma \eqref{cutFolland}, to the form $\zeta_0 u$ we have 
\[\|\zeta_1 \phi\|_{\epsilon}^2\lesssim \mathfrak{Re}\ Q( \phi,\zeta_0\zeta_1^2 \phi)+\mathcal O(\|\zeta_0 \phi\|^2).\]
\[ =  \mathfrak{Re}\  \langle F \phi,\zeta_1^2 \phi\rangle+\mathcal O(\|\zeta_0 \phi\|^2) =  \mathfrak{Re}\  \langle\zeta_1 F \phi,\zeta_1 \phi\rangle+\mathcal O(\|\zeta_0 \phi\|^2).\]

\noindent
Now, by the generalized Schwartz inequality, we have
\[\|\zeta_1 \phi\|_{\epsilon}^2\lesssim  \mathfrak{Re}\  \langle \zeta_1F \phi,\zeta_1 \phi\rangle+\mathcal O(\|\zeta_0 \phi\|^2)\lesssim \|\zeta_1F \phi\|_{-\epsilon}\|\zeta_1 \phi\|_{\epsilon}+\mathcal O(\|\zeta_0 \phi\|^2).\]
\noindent
Now, for any $c>0$ there exists a $C>0$ sufficiently large so that
\[\|\zeta_1 \phi\|_{\epsilon}^2\lesssim C\|\zeta_1F \phi\|_{-\epsilon}^2+c\|\zeta_1 \phi\|_{\epsilon}^2+\mathcal O(\|\zeta_0 \phi\|^2)\]
\noindent
thus
\[\|\zeta_1 \phi\|_{\epsilon}^2\lesssim \|\zeta_0F \phi\|_{-\epsilon}^2+\|\zeta_0 \phi\|^2,\]
\noindent
and we have shown that the lemma is true for $k=1$. Assume the lemma true for $k-1$, {\it i.e.} 
\begin{equation}\label{indhyp}\|\zeta_{k-1} \phi\|^2_{(k-1)\epsilon}\lesssim \|\zeta_0 F \phi\|^2_{(k-3)\epsilon} +\|\zeta_0 \phi\|^2. \end{equation}
\noindent
We follow and modify where necessary the proof in \cite{KN}, citing intermediate results.  According to Lemma \ref{k2sthing}, we may make the second replacement in our estimates, 
\[||\zeta_{k-1} \phi ||_{(k-1)\epsilon} \approx ||D\zeta_{k-1} \phi ||_{(k-1)\epsilon-1} \leftrightarrow |||D\zeta_{k-1} \phi |||_{(k-1)\epsilon-1};\] 
\noindent
the first is obvious. Abbreviating $\Lambda_{\bf t}^{(k-1)\epsilon}=\Lambda$ and $A=\zeta_1\Lambda\zeta_k$, Kohn-Nirenberg derive their (6.6), 
\begin{equation}\label{commpush1}|||D\zeta_k \phi|||^2_{k\epsilon-1}\lesssim |||DA\phi|||^2_{\epsilon-1} + |||D\zeta_{k-1} \phi|||^2_{(k-1)\epsilon-1}.\end{equation}
\noindent
Next, KN arrive at 
\[|||DA\phi|||^2_{\epsilon-1}\lesssim |||\zeta_1F\phi|||_{(k-2)\epsilon}|||D\zeta_k  \phi|||_{k\epsilon-1}+ |||D\zeta_{k-1}\phi|||_{(k-1)\epsilon-1}^2 \]
\noindent
which they substitute into the estimate \eqref{commpush1}, obtaining
\[|||D\zeta_k \phi|||^2_{k\epsilon-1}\lesssim |||\zeta_1F\phi|||_{(k-2)\epsilon}|||D\zeta_k  \phi|||_{k\epsilon-1}+ |||D\zeta_{k-1} \phi|||^2_{(k-1)\epsilon-1}\]
\[\lesssim C |||\zeta_1F\phi|||_{(k-2)\epsilon}^2+c |||D\zeta_k  \phi|||_{k\epsilon-1}^2+ |||D\zeta_{k-1} \phi|||^2_{(k-1)\epsilon-1}.\]
\[\lesssim  |||\zeta_1F\phi|||_{(k-2)\epsilon}^2 + |||D\zeta_{k-1} \phi|||^2_{(k-1)\epsilon-1}.\]
\noindent
The induction hypothesis reads 
\[|||D\zeta_{k-1} \phi |||_{(k-1)\epsilon-1}\lesssim \|\zeta_0 F \phi\|^2_{(k-3)\epsilon} +\|\zeta_0 \phi\|^2\]
\noindent
so 
\[|||D\zeta_k \phi|||^2_{k\epsilon-1}\lesssim  |||\zeta_1F\phi|||_{(k-2)\epsilon}^2 +\|\zeta_0 F \phi\|^2_{(k-3)\epsilon} +\|\zeta_0 \phi\|^2.\]
\noindent
Since $ |||\zeta_1F\phi|||_{(k-2)\epsilon}\le  \|\zeta_1F\phi\|_{(k-2)\epsilon}$ and $\|\zeta_0 F \phi\|^2_{(k-3)\epsilon}\le \|\zeta_1F\phi\|_{(k-2)\epsilon}$, we have that 
\[|||D\zeta_k \phi|||^2_{k\epsilon-1}\lesssim \|\zeta_0 F \phi\|^2_{(k-2)\epsilon} +\|\zeta_0 \phi\|^2.\]
\noindent 
Again invoking Lemma \ref{k2sthing}, we replace $|||D\zeta_k \phi|||_{k\epsilon-1}$ with $\|\zeta_k \phi\|_{k\epsilon}$ and obtain the result. \end{proof}
%

The following is Thm. 4.5 from \cite{P1}, adapted to our current situation.

\begin{theorem}\label{soboreg} Let $q>0$ and suppose a subelliptic estimate of order $\epsilon$ holds in a $G$-manifold $M$ with compact orbit space $M/G$ and invariant structures. Then, for every integer $k\ge2$ we have an estimate 
\[\|\phi\|_{k\epsilon}^2\lesssim \|\square\phi\|_{(k-2)\epsilon}^2+ \|\phi\|^2,\]
\noindent
holding uniformly for $\phi\in {\rm Dom}(\square)\cap C^\infty(\bar M,\Lambda^{p,q})$.\end{theorem}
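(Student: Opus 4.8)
\noindent\emph{Proof strategy.} The plan is to globalize, by a $G$-invariant partition of unity, the local \emph{a priori} estimates already in hand --- Lemma~\ref{biglem} near boundary points and the standard interior elliptic \emph{a priori} estimate for $F$ near interior points --- and then to run a short strong induction on $k$ that converts $F=\square+1$ into $\square$ and absorbs the residual term $\|\phi\|_{(k-2)\epsilon}$.

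First I would fix the geometry. Since $\bar X=\bar M/G$ is compact and the metric, the bundles, and the inner products are $G$-invariant, $\bar M$ can be covered by finitely many $G$-orbits of charts: for each boundary point a special boundary chart $V$ in which, by hypothesis, a subelliptic estimate of order $\epsilon$ holds --- the same estimate then holding with the same constant in every translate $sV$, $s\in G$ --- together with an ordinary coordinate ball around each interior point. As $bM/G$ is compact, finitely many translates of the boundary charts and of the interior balls already cover $\bar M$; thus only finitely many ``types'' of patch occur and all implied constants below are uniform. Pulling back a partition of unity and nested cutoffs from $\bar X$ gives a $G$-invariant, locally finite partition of unity $\{\psi^{(j)}\}$ together with, on each patch, real functions $\zeta_0^{(j)},\dots,\zeta_k^{(j)}$, each equal to $1$ on the support of the next and on that of $\psi^{(j)}$, exactly as Lemma~\ref{biglem} requires. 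Finally, for $\phi\in{\rm Dom}(\square)\cap C^\infty(\bar M,\Lambda^{p,q})$ one has $\phi\in{\rm Dom}(F)\cap\mathcal D^{p,q}$, and this membership is preserved under multiplication by the cutoffs, so every local estimate applies.

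On each boundary patch Lemma~\ref{biglem} gives $\|\zeta_k^{(j)}\phi\|_{k\epsilon}^2\lesssim\|\zeta_0^{(j)}F\phi\|_{(k-2)\epsilon}^2+\|\zeta_0^{(j)}\phi\|^2$, and on each interior patch the interior elliptic estimate gives the same bound (the interior estimate in fact gains two full derivatives, more than the needed $2\epsilon$; see Subsection~\ref{intell}). The decisive feature is that the residual term $\|\zeta_0^{(j)}\phi\|^2$ is \emph{localized}, so summation over the infinitely many $G$-translated patches is legitimate: by local finiteness and compactness of $M/G$ the supports have bounded overlap, whence $\sum_j\|\zeta_0^{(j)}\phi\|^2\lesssim\|\phi\|^2$ and $\sum_j\|\zeta_0^{(j)}F\phi\|_{(k-2)\epsilon}^2\lesssim\|F\phi\|_{(k-2)\epsilon}^2$, while the partition of unity gives $\|\phi\|_{k\epsilon}^2\lesssim\sum_j\|\zeta_k^{(j)}\phi\|_{k\epsilon}^2$ (commutators with cutoffs being of lower order). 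With $F\phi=\square\phi+\phi$ this yields, for every integer $k\ge1$,
\[\|\phi\|_{k\epsilon}^2\lesssim\|\square\phi\|_{(k-2)\epsilon}^2+\|\phi\|_{(k-2)\epsilon}^2+\|\phi\|^2,\]
read in the usual way for an \emph{a priori} estimate (the left side is finite and bounded as stated whenever the right side is), with $\|\cdot\|_s$ for $s<0$ dominated by $\|\cdot\|_0$. For $k=1,2$ one has $(k-2)\epsilon\le0$, so $\|\phi\|_{(k-2)\epsilon}\le\|\phi\|$ and this is the assertion; for $k\ge3$, applying the case $k-2\ge1$ gives $\|\phi\|_{(k-2)\epsilon}^2\lesssim\|\square\phi\|_{(k-4)\epsilon}^2+\|\phi\|^2\le\|\square\phi\|_{(k-2)\epsilon}^2+\|\phi\|^2$ by monotonicity of the Sobolev norms, and substituting closes the induction.

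The point requiring the most care is the uniformity of the constants in this globalization: it is precisely here that the hypothesis of a $G$-manifold with compact quotient $M/G$ and invariant structures (hence bounded geometry) is used --- to reduce the cover to finitely many translation-types sharing a common subelliptic constant, and to produce a $G$-invariant partition of unity and nested cutoffs with uniformly bounded derivatives. The rest --- the $F$-to-$\square$ bookkeeping and the induction on $k$ --- is routine; all the analytic weight sits in Lemma~\ref{biglem} and the interior estimates.
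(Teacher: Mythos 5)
Your proposal is correct and follows essentially the same route as the paper: the paper's proof likewise invokes Lemma~\ref{biglem}, the interior estimates, the compactness of $M/G$ with invariant structures, and a partition of unity of bounded multiplicity to glue the local \emph{a priori} estimates into the global one. The only difference is that you spell out the $F$-to-$\square$ conversion and the induction absorbing $\|\phi\|_{(k-2)\epsilon}$, details the paper leaves implicit.
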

\begin{proof} From Lemma \ref{biglem}, uniformity of the group invariance, the compactness of $M/G$, and again as in \cite{G, S1} we may construct appropriate partitions of unity (of bounded multiplicity) and glue together the local {\it a priori} estimates \eqref{aag} to obtain the global estimate. \end{proof}

\subsection{Genuine estimates} So far, all that we have shown assumes that $\phi$ is a smooth solution of $F\phi=\alpha$ and derives estimates involving the Sobolev norms of $\phi$ in terms of those of $\alpha$. It remains to show that if $\phi=F^{-1}\alpha$ now is the element of $L^2$ guaranteed by the bounded invertibility of $F$, that this $\phi$ has the same smoothness properties as predicted by the {\it a prioris}. {\it I.e.} it is not yet clear that these derivatives of $\phi$ exist. There are several ways to deduce this, {\it e.g.} in \S 4.1--2 of \cite{KN} who perform an {\it elliptic regularization}.\footnote{In \cite{FK}, \S II.5, the same method as in \cite{KN} is used, but the theorem's validity is restricted to the case of compact $M$ as its proof uses Rellich's lemma. In \cite{E}, \S 3 a different method is proposed, valid, like ours, in the case of $M$ noncompact.} The upshot is that from the first {\it a priori} estimate it follows that 

\begin{theorem} Let $\alpha\in L^2(M,\Lambda^{p,q})$ and a subelliptic estimate of order $\epsilon$ hold in $\mathcal D^{p,q}$ Let $U$ be an open subset of $\bar M$ with compact closure, and $\zeta, \zeta_{1}\in C^{\infty}_{c}(U)$ with $\zeta_{1}|_{{\rm supp}(\zeta)}=1$.  If $q>0$, $j$ is a nonnegative integer, and $\alpha|_{U}\in C^\infty(U,\Lambda^{p,q})$, then $\zeta(\square +1)^{-1}\alpha\in C^\infty(M,\Lambda^{p,q})$ and
\begin{equation}\label{realest}\|\zeta(\square +1)^{-1}\alpha\|_{(j+2)\epsilon}^2\lesssim \|\zeta_1 \alpha \|_{j\epsilon}^2 + \|\alpha\|^2,\end{equation}\end{theorem}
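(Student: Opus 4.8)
The statement we must prove is a genuine regularity theorem: the \emph{a priori} estimate \eqref{bigap1} (equivalently Corollary \ref{englis3.1} glued with the interior estimates) must be upgraded to a statement about the actual element $\phi = (\square+1)^{-1}\alpha$ produced by the bounded invertibility of $F$, without any pre-assumption that the Sobolev derivatives of $\phi$ exist. The plan is to run the elliptic regularization argument of \cite{KN}, \S 4.1--2, in our noncompact $G$-invariant setting, taking care (as in \cite{E}, \S 3) never to invoke Rellich's lemma, which fails on noncompact $M$.

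First I would introduce the regularized forms $Q^\delta$ from Subsection \ref{intell} on boundary charts as well: for $\delta>0$ set $Q^\delta(\phi,\psi) = Q(\phi,\psi) + \delta\langle\phi,\psi\rangle_{H^1}$ on the natural domain, so that G\aa rding's inequality $\|\phi\|_1^2 \lesssim Q^\delta(\phi,\phi)$ holds and the induced operator $F^\delta$ is genuinely elliptic, hence by the classical (interior and boundary) elliptic theory gains two full derivatives; in particular $\phi^\delta := (F^\delta)^{-1}\alpha$ is as smooth as $\alpha$ permits, for each fixed $\delta>0$. The point of regularization is that the \emph{a priori} estimates already established — the interior estimate \eqref{intest} and the boundary estimate \eqref{firstap}/\eqref{bigap1} — can be re-derived for $Q^\delta$ with constants \emph{independent of $\delta$}, because the extra term $\delta\|\phi\|_1^2$ has a favorable sign and only helps in the integrations by parts and commutator manipulations of Lemma \ref{cutFolland} and Lemma \ref{biglem}. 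Concretely, one checks that the key identities of Lemma \ref{cutFolland}, the generalized Schwarz trick, and the Kohn--Nirenberg commutator pushes (6.6) et seq.\ all go through verbatim with $Q$ replaced by $Q^\delta$, the error terms being dominated uniformly in $\delta$; this yields
\[
\|\zeta\phi^\delta\|_{(j+2)\epsilon}^2 \lesssim \|\zeta_1\alpha\|_{j\epsilon}^2 + \|\alpha\|^2
\]
with the implied constant independent of $\delta\in(0,1]$, granted $\alpha|_U$ is smooth on $U$.

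Next comes the limiting argument. Since $F^\delta \to F$ in the appropriate sense, standard functional-analytic arguments (monotone convergence of the quadratic forms $Q^\delta \downarrow Q$, cf.\ Prop.\ 1.3.3 and \S II.5 of \cite{FK}) give $\phi^\delta \to \phi = (\square+1)^{-1}\alpha$ in $L^2(M,\Lambda^{p,q})$ as $\delta\to0$. The uniform bound above shows the family $\{\zeta\phi^\delta\}$ is bounded in $H^{(j+2)\epsilon}(M,\Lambda^{p,q})$; extracting a weakly convergent subsequence and matching the weak limit against the $L^2$ limit, we conclude $\zeta\phi \in H^{(j+2)\epsilon}$ with $\|\zeta\phi\|_{(j+2)\epsilon}^2 \lesssim \|\zeta_1\alpha\|_{j\epsilon}^2 + \|\alpha\|^2$, i.e.\ \eqref{realest}. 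Running this for all $j$ and bootstrapping (the output $\zeta\phi \in H^{(j+2)\epsilon}$ feeds back as improved local regularity of $\phi$ on a slightly smaller chart, raising $j$) yields $\zeta(\square+1)^{-1}\alpha \in C^\infty(M,\Lambda^{p,q})$ by Sobolev embedding, once we recall that $\square$ and $\square+1$ have the same domain and that $\ker\square$ does not interfere since $F = \square+1$ is invertible. I would organize the bootstrap exactly as in the proof of Thm.\ 4.5/Cor.\ 4.3 of \cite{P1}, now with general $\epsilon$ rather than $\epsilon=1/2$.

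**Main obstacle.** The delicate point is verifying that \emph{every} error term in the $Q^\delta$-versions of Lemma \ref{cutFolland} and Lemma \ref{biglem} is bounded uniformly in $\delta$ — in particular that the $\delta\|\cdot\|_1^2$ contributions, which appear with indefinite sign once one commutes the tangential smoothing operators $A = \zeta_1\Lambda_{\bf t}^r\zeta_k$ past $Q^\delta$, are either absorbed on the favorable side or controlled by the subelliptic estimate with a $\delta$-free constant. This is where \cite{E}'s observation that locality plus a single subelliptic estimate suffices is essential: one never needs compactness of $M$ (hence never Rellich), only the uniform bounded-geometry structure of the $G$-manifold $M$ with $M/G$ compact, which lets us glue the local \emph{a priori} estimates with partitions of unity of bounded multiplicity exactly as in \cite{G, S1}. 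A secondary but routine technical nuisance is the weak-limit identification near the boundary, which requires knowing that the $H^{(j+2)\epsilon}$-bounded family $\zeta\phi^\delta$ has boundary traces behaving compatibly; this follows from the noncharacteristic nature of $bM$ for $Q$ (Lemma \ref{k2sthing}) and the equivalence $\||D\zeta_k\phi|\|_{k\epsilon-1} \approx \|\zeta_k\phi\|_{k\epsilon}$ already used above.
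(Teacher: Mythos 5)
Your proposal is correct and follows essentially the same route the paper takes: the paper's own ``proof'' consists of citing the elliptic regularization of Kohn--Nirenberg (\S 4.1--2 of \cite{KN}), with the caveat (noted in the paper's footnote and in your plan) that the Rellich-based version in \cite{FK} must be replaced by an argument valid on noncompact $M$ as in \cite{E}. Your write-up simply supplies the $\delta$-uniform estimates and the weak-limit step that the paper leaves to the references.
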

\noindent
uniformly for $\alpha$.

\begin{corollary}\label{globalize} Let $M$ satisfy a subelliptic estimate of order $\epsilon>0$ uniformly, let $U$ be an open subset of $\bar M$ with compact closure, and $\zeta, \zeta_{1}\in C^{\infty}_{c}(U)$ for which $\zeta_{1}|_{{\rm supp}(\zeta)}=1$.  If $q>0$, $j$ is a nonnegative integer, and $\alpha|_{U}\in H^{j\epsilon}(U,\Lambda^{p,q})$, then $\zeta(\square +1)^{-1}\alpha\in H^{(j+2)\epsilon}(M,\Lambda^{p,q})$ and there exist constants $C_j>0$ so that 
 \begin{equation}\label{prima}\|\zeta (\square +1)^{-1}\alpha\|_{(j+2)\epsilon}^2\le C_k(\|\zeta_{1}\alpha\|_{j\epsilon}^2+\|\alpha\|_0^2).\end{equation}
 \end{corollary}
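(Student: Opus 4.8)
The plan is to derive Corollary \ref{globalize} from the smooth case, Theorem (the ``genuine estimate'' \eqref{realest}), by a density/approximation argument, exploiting that the estimate \eqref{realest} is uniform in $\alpha$ and that both sides depend continuously on $\alpha$ in the relevant norms. First I would fix $q>0$, the integer $j\ge0$, the open set $U\subset\bar M$ with compact closure, and the cutoffs $\zeta,\zeta_1\in C^\infty_c(U)$ with $\zeta_1\equiv1$ on $\operatorname{supp}\zeta$; insert an intermediate cutoff $\zeta_2\in C^\infty_c(U)$ with $\zeta_2\equiv1$ on $\operatorname{supp}\zeta_1$ so that there is room to localize. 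Given $\alpha\in L^2(M,\Lambda^{p,q})$ with $\alpha|_U\in H^{j\epsilon}(U,\Lambda^{p,q})$, I would choose a sequence $\alpha_\nu\in C^\infty(\bar M,\Lambda^{p,q})$ with $\alpha_\nu\to\alpha$ in $L^2(M)$ and, simultaneously, $\zeta_2\alpha_\nu\to\zeta_2\alpha$ in $H^{j\epsilon}$; such a sequence exists because $X=M/G$ is compact (so $\operatorname{supp}\zeta_2$ lies in a fixed compact set where the invariant Sobolev spaces reduce to the usual ones) and $C^\infty$ is dense in $H^{j\epsilon}$ locally.

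Next, applying \eqref{realest} to each $\alpha_\nu$ with the cutoff pair $(\zeta,\zeta_1)$ gives $\zeta(\square+1)^{-1}\alpha_\nu\in C^\infty(M,\Lambda^{p,q})$ together with the bound $\|\zeta(\square+1)^{-1}\alpha_\nu\|^2_{(j+2)\epsilon}\lesssim\|\zeta_1\alpha_\nu\|^2_{j\epsilon}+\|\alpha_\nu\|^2$, where the implied constant $C_j$ is independent of $\nu$. The same estimate applied to the difference $\alpha_\nu-\alpha_\mu$ shows that $\bigl(\zeta(\square+1)^{-1}\alpha_\nu\bigr)_\nu$ is Cauchy in $H^{(j+2)\epsilon}(M,\Lambda^{p,q})$, hence converges there to some $w$. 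On the other hand, $(\square+1)^{-1}$ is bounded on $L^2(M)$ by the bounded invertibility of $\square+1$ (equivalently $F$), so $(\square+1)^{-1}\alpha_\nu\to(\square+1)^{-1}\alpha$ in $L^2$, and therefore $\zeta(\square+1)^{-1}\alpha_\nu\to\zeta(\square+1)^{-1}\alpha$ in $L^2$. Since convergence in $H^{(j+2)\epsilon}$ implies convergence in $L^2$, the two limits agree: $w=\zeta(\square+1)^{-1}\alpha$, which thus lies in $H^{(j+2)\epsilon}(M,\Lambda^{p,q})$. Passing to the limit in the inequality, using $\|\zeta_1\alpha_\nu\|_{j\epsilon}\to\|\zeta_1\alpha\|_{j\epsilon}$ (note $\zeta_1$ is supported where $\zeta_2\equiv1$, so this is controlled by $\zeta_2\alpha_\nu\to\zeta_2\alpha$ in $H^{j\epsilon}$) and $\|\alpha_\nu\|\to\|\alpha\|$, yields \eqref{prima} with the same constant $C_j$. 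Uniformity in $\alpha$ is inherited from the uniformity of \eqref{realest}.

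The main obstacle, and the only point requiring genuine care, is the construction of the approximating sequence $\alpha_\nu$ with simultaneous $L^2$-global and $H^{j\epsilon}$-local convergence, together with the bookkeeping of the nested cutoffs: one must ensure that the local Sobolev norm $\|\zeta_1\alpha\|_{j\epsilon}$ appearing on the right of \eqref{prima} is genuinely controlled by the convergence of $\zeta_2\alpha_\nu$, which is why the extra cutoff $\zeta_2$ is introduced. Since $\operatorname{supp}\zeta_2$ meets only finitely many members of a bounded-multiplicity invariant partition of unity (as in \cite{G, S1}) and lies in a set biholomorphic to a fixed compact subset, standard mollification adapted to the boundary chart produces the desired $\alpha_\nu$; the uniformity of all structures on the $G$-manifold with compact quotient guarantees that no constants degenerate. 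Everything else is the soft functional-analytic limiting argument sketched above.
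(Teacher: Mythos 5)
Your proposal is correct and is essentially the paper's own argument: the paper disposes of this corollary in one line as ``a density argument applied to the real estimates above,'' and you have simply written out that density argument in full (approximation by smooth forms, uniformity of the constant in \eqref{realest}, Cauchy sequence in $H^{(j+2)\epsilon}$, identification of limits via $L^2$ convergence). The extra cutoff $\zeta_2$ and the bounded-multiplicity partition of unity are sensible bookkeeping, consistent with the paper's conventions.
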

 \begin{proof}This is an extension of Prop. 3.1.1 from \cite{FK}. It is a density argument applied to the real estimates above.\end{proof}

The following is a modified version of Cor. 4.3 from \cite{P1}.
\begin{corollary}\label{esso}Let $q>0$ and $\square=\int_{0}^{\infty}\lambda dE_{\lambda}$ be the spectral decomposition of the Laplacian in $L^{2}(M,\Lambda^{p,q})$.  If $\delta>0$ and $P=\int_{0}^{\delta}dE_{\lambda}$ then $\im P\subset C^{\infty}(\bar M,\Lambda^{p,q})$.\end{corollary}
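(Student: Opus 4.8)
The plan is to combine the real (genuine) subelliptic estimate of Corollary~\ref{globalize} with the observation that, by the spectral theorem, every element of $\im P$ is killed by arbitrarily high powers of $\square$ modulo lower order. Concretely, fix $\phi\in\im P$. Since $P=\int_0^\delta dE_\lambda$ is a spectral projection of $\square$, the subspace $\im P$ is $\square$-invariant and $\square$ acts on it as a bounded self-adjoint operator of norm $\le\delta$; hence $\phi\in\dom\square^k$ for all $k$, the forms $\phi_k:=(\square+1)^k\phi$ all lie in $\im P\subset L^2(M,\Lambda^{p,q})$, and $(\square+1)^{-1}\phi_{k+1}=\phi_k$ (as a genuine, not merely weak, solution, because $\phi_k\in\dom\square\subset\dom(\square+1)$ with $(\square+1)\phi_k=\phi_{k+1}$). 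Since smoothness is a local property, it suffices to show that $\phi$ belongs to $H^s$ near each point of $\bar M$ for every $s>0$.

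Next I would run a bootstrap. Corollary~\ref{globalize} upgrades local regularity of $(\square+1)^{-1}\alpha$ by $2\epsilon$ whenever $\alpha$ has the corresponding local regularity (and $q>0$, which is where the hypothesis enters). Fix $x\in\bar M$ and an integer $N\ge1$, and choose relatively compact open subsets $U_0\supset U_1\supset\cdots\supset U_N$ of $\bar M$ with $x\in U_N$ and each $\bar U_{i+1}$ compactly contained in $U_i$ (with a little extra room for the intermediate cutoff $\zeta_1$ appearing in the corollary). Starting from $\phi_N\in L^2$ and applying Corollary~\ref{globalize} with $\alpha=\phi_{N-i}$, $j=2i$, and cutoffs supported in $U_i$ and equal to $1$ near $\bar U_{i+1}$, one proves by induction on $i$ that $\phi_{N-i}\big|_{U_i}\in H^{2i\epsilon}(U_i,\Lambda^{p,q})$; the base case $i=0$ is $\phi_N\in L^2$, and the inductive step uses $(\square+1)^{-1}\phi_{N-i}=\phi_{N-i-1}$. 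Taking $i=N$ gives $\phi=\phi_0\in H^{2N\epsilon}(U_N,\Lambda^{p,q})$. As $N$ and $x$ are arbitrary, $\phi$ lies in $H^s$ locally for every $s$, so $\phi\in C^\infty(\bar M,\Lambda^{p,q})$ by Sobolev embedding, giving $\im P\subset C^\infty(\bar M,\Lambda^{p,q})$.

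I do not expect a serious obstacle: essentially all the analytic work is already packaged in Corollary~\ref{globalize} (equivalently, in the real estimate obtained from the a priori estimates by elliptic regularization). The only points needing care are the bookkeeping of the nested cutoff functions, so that each application of the corollary has available the auxiliary cutoff it requires, and the small conceptual check noted above that $(\square+1)^{-1}$ applied to $\phi_{k+1}$ really returns $\phi_k$. This is the same argument as Cor.~4.3 of \cite{P1}, now carried out with the gain $2\epsilon$ in place of the strongly pseudoconvex gain $1$.
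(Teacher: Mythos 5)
Your proposal is correct and follows essentially the same route as the paper: both arguments bootstrap local regularity via the genuine estimate of Corollary~\ref{globalize}, using the key fact that $\im P$ is invariant under $\square+1$ (so $(\square+1)^k\phi$ stays in $\im P\subset L^2$ and $(\square+1)^{-1}$ returns you one step down the chain), gaining $2\epsilon$ per step and concluding by Sobolev embedding. The only difference is organizational --- you fix $N$ in advance and descend from $\phi_N$ through nested neighborhoods, while the paper inducts upward on the regularity of the whole subspace $\im P$ --- which is a matter of bookkeeping, not substance.
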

\begin{proof}We show that $\im P\subset H^{s}_{\rm loc}(M,\Lambda^{p,q})$ for all $s>0$ and invoke the Sobolev embedding theorem.  Let $U, \zeta, \zeta_{1}$ be as in the previous theorem.  Since $\im((\square +1)^{-1})=\dom(\square)$, the following is true. For every $\phi\in\dom(\square)$ with $\alpha=\square\phi +\phi\in H^{j\epsilon}_{\rm loc}(M)$, we have $\phi\in H^{(j+2)\epsilon}_{\rm loc}(M)$ and the estimate \eqref{prima} holds uniformly. Let $\phi\in\im P$.  Applying the theorem with $j=0$, we have $\im P\subset H^{2\epsilon}_{\rm loc}(M,\Lambda^{p,q})$.  Now assume $\phi\in\im P\subset H^{j\epsilon}_{\rm loc}(M,\Lambda^{p,q})$.  Then
\[(\square+1)\phi=(\square+1)P\phi=P(\square+1)\phi\in H^{j\epsilon}_{\rm loc}(M,\Lambda^{p,q}).\]
\noindent  
We conclude that $\phi\in H^{(j+2)\epsilon}_{\rm loc}(M,\Lambda^{p,q})$ and so $\im P\subset H^{(j+2)\epsilon}_{\rm loc}(M,\Lambda^{p,q})$.\end{proof}

\begin{corollary}\label{bomb}Let $q>0$ and $\square=\int_{0}^{\infty}\lambda dE_{\lambda}$ be the spectral decomposition of the Laplacian in $L^{2}(M,\Lambda^{p,q})$.  If $\delta>0$ and $P=\int_{0}^{\delta}dE_{\lambda}$ then $\im P\subset H^{\infty}(M,\Lambda^{p,q})$.\end{corollary}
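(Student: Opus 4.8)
The plan is to bootstrap, using the global \emph{a priori} estimate of Theorem \ref{soboreg} as the engine and Corollary \ref{esso} to guarantee that its hypotheses are met at every stage. Fix $\delta>0$, set $P=\int_0^\delta dE_\lambda$, and let $\phi\in\im P$. Then $\phi\in\dom(\square)$ because $\im P$ is the spectral subspace of a bounded interval, and $\phi\in C^\infty(\bar M,\Lambda^{p,q})$ by Corollary \ref{esso}; hence $\phi\in\dom(\square)\cap C^\infty(\bar M,\Lambda^{p,q})$, so Theorem \ref{soboreg} is applicable to $\phi$. Since $\square$ commutes with its own spectral projections, $\square\phi=\square P\phi=P\square\phi\in\im P$, and likewise $(\square+1)\phi\in\im P$; moreover $\|\square\phi\|\le\delta\|\phi\|$ because $\square P$ is bounded.

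Next I would prove $\im P\subset H^{j\epsilon}(M,\Lambda^{p,q})$ for every nonnegative integer $j$ by induction on $j$. For $j=0$ this is just $\im P\subset L^2(M,\Lambda^{p,q})$. Assume it for some $j\ge0$. Given $\phi\in\im P$, the inductive hypothesis together with the previous paragraph gives $\square\phi\in\im P\subset H^{j\epsilon}(M,\Lambda^{p,q})$, so Theorem \ref{soboreg} with $k=j+2$ yields
\[\|\phi\|_{(j+2)\epsilon}^2\lesssim\|\square\phi\|_{j\epsilon}^2+\|\phi\|^2<\infty,\]
i.e. $\phi\in H^{(j+2)\epsilon}(M,\Lambda^{p,q})$. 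This closes the induction, and since $\epsilon>0$ the numbers $j\epsilon$ are cofinal in $[0,\infty)$, so $\im P\subset\bigcap_{s>0}H^s(M,\Lambda^{p,q})=H^\infty(M,\Lambda^{p,q})$.

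There is essentially no obstacle left once Theorem \ref{soboreg} is in hand: that theorem already contains the one genuinely global ingredient, namely the gluing of the local estimates of Lemma \ref{biglem} by a bounded-multiplicity partition of unity adapted to the bounded geometry of the $G$-manifold $M$ with compact quotient $M/G$. The only points requiring a little care in the bootstrap are (i) verifying the hypotheses of Theorem \ref{soboreg} for each $\phi\in\im P$, for which Corollary \ref{esso} is exactly what is needed, and (ii) keeping $\square\phi$ inside the Sobolev class produced at the previous stage, which uses the $\square$-invariance of $\im P$ and not merely the operator bound $\|\square P\|\le\delta$.
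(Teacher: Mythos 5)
Your proof is correct and follows essentially the same route as the paper: Corollary \ref{esso} supplies the smoothness needed to invoke the a priori estimate of Theorem \ref{soboreg}, and the $\square$-invariance of $\im P$ lets you bootstrap the Sobolev regularity in increments of $2\epsilon$. The paper writes this same bootstrap as a telescoping chain of estimates applied to the forms $\square^{k-j}\phi$, whereas you phrase it as an induction on the regularity of the subspace $\im P$; the content is identical.
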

\begin{proof} Let $\phi\in\im P$. By Cor. \ref{esso}, $\phi\in C^\infty(\bar M,\Lambda^{p,q})$ and so Thm. \ref{soboreg} holds. Since $\phi\in\dom\square^k$, $k=1,2,3,\dots$, we have
\[\|\square^{k-j}\phi\|_{(j+2)\epsilon}\lesssim \|\square^{k-j+1} \phi\|_{j\epsilon} + \|\square^{k-j}\phi\|_0 \quad (j=1,2,\dots,k).\]
\end{proof}

\noindent
We need the following fact regarding Sobolev spaces on manifolds with boundary. For $s>0$, denote by $H^{-s}(\bar M)$ the dual space of $H^s(\bar M)$. {\it I.e.} $H^{-s}(\bar M)=(H^s(\bar M))'$. Remark 12.5 of \cite{LM} gives that if $M$ is a manifold with boundary and $s>0$, then $H^{-s}(\bar M)$ consists of elements of $H^{-s}(\tilde M)$ whose support is in $\bar M$.

\begin{corollary}\label{applied}Let $q>0$ and $\square=\int_{0}^{\infty}\lambda dE_{\lambda}$ be the spectral decomposition of the Laplacian in $L^{2}(M,\Lambda^{p,q})$.  If $\delta>0$ and $P=\int_{0}^{\delta}dE_{\lambda}$ then $P: H^{-s}(\bar M,\Lambda^{p,q})\to H^{s}(M,\Lambda^{p,q})$ for any positive integer $s$.\end{corollary}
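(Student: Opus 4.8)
The plan is to factor $P$ through the chain of inclusions already established and use duality. The key observation is that $P$ is a bounded operator on $L^2(M,\Lambda^{p,q})$ that commutes with $\square$, hence with $(\square+1)^k$ for all $k$, and that by Corollary~\ref{bomb} we have $\im P\subset H^\infty(M,\Lambda^{p,q})$; moreover, by construction $P=P(\square+1)^{-k}(\square+1)^k$ on $\dom\square^k$ and, since $\square$ is bounded on $\im P$ (it equals $\int_0^\delta\lambda\,dE_\lambda$, which is bounded), we get $P=(\square+1)^{-k}(\square+1)^kP=(\square+1)^k P (\square+1)^{-k}$ in a suitable sense. So first I would show, using that $P$ maps into $\dom\square^k$ for every $k$ together with the estimate of Corollary~\ref{bomb}, that $P$ maps $L^2$ boundedly into $H^{2k\epsilon}(M,\Lambda^{p,q})$, hence into $H^s(M)$ for $s$ any fixed positive integer (choosing $k$ large).

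Next I would handle the negative-order source space. The point is that $P$ is \emph{symmetric}, $\langle P\alpha,\beta\rangle=\langle\alpha,P\beta\rangle$ for $\alpha,\beta\in L^2$; combined with the previous step this lets us extend $P$ by duality. Concretely, for $\alpha\in H^{-s}(\bar M,\Lambda^{p,q})$ and $\beta\in C^\infty_c(M,\Lambda^{p,q})$, define $\langle P\alpha,\beta\rangle:=\langle\alpha,P\beta\rangle$; the right-hand side makes sense because, by the recollection from Remark~12.5 of \cite{LM} quoted just before the statement, $H^{-s}(\bar M)$ pairs with $H^s(\bar M)\supset H^s(M)$, and by the first step $P\beta\in H^s(M,\Lambda^{p,q})$ with $\|P\beta\|_{H^s}\lesssim\|\beta\|_{L^2}$. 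This shows $P\alpha$ is a well-defined element of $L^2(M,\Lambda^{p,q})$ extending the original $P$ on the dense subspace $L^2\subset H^{-s}$, with $\|P\alpha\|_{L^2}\lesssim\|\alpha\|_{H^{-s}(\bar M)}$. Now $\im P$ is unchanged under this extension (it is still contained in the range of the original projection), so applying the first step once more, $P\alpha\in H^s(M,\Lambda^{p,q})$ and $\|P\alpha\|_{H^s(M)}\lesssim\|P\alpha\|_{L^2}\lesssim\|\alpha\|_{H^{-s}(\bar M)}$, which is the assertion.

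I expect the main subtlety to be bookkeeping with the boundary: one must be careful that $H^s(M)$, $H^s(\bar M)$, and the negative-order dual $H^{-s}(\bar M)$ are the right spaces at each step, and in particular that the elements of $\im P$, which a priori lie in $C^\infty(\bar M,\Lambda^{p,q})$ by Corollary~\ref{esso}, do belong to $H^s(\bar M)$ so that the duality pairing with $H^{-s}(\bar M)$ is legitimate — this is exactly where the quoted Remark~12.5 of \cite{LM} is used, identifying $H^{-s}(\bar M)$ with distributions in $H^{-s}(\tilde M)$ supported in $\bar M$. A secondary point requiring a line of justification is that $\square$ restricted to $\im P$ is a bounded operator (immediate from the spectral calculus, since $P=\int_0^\delta dE_\lambda$), so that the powers $(\square+1)^kP$ occurring implicitly are bounded on $L^2$; given that, the chain of Sobolev estimates from Corollary~\ref{bomb} telescopes without difficulty. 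The rest is routine density and duality.
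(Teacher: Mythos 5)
Your argument is essentially the paper's own proof: first $P:L^2\to H^s(M)$ from Corollary~\ref{bomb}, then extension to $P:H^{-s}(\bar M)\to L^2$ by self-adjointness and duality, and finally the idempotency $P^2=P$ (your ``apply the first step once more'') to bootstrap to $P:H^{-s}(\bar M)\to H^s(M)$. The preliminary factorization through $(\square+1)^k$ is not needed for this, but it does no harm.
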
 
\begin{proof} In Lemma \ref{bomb} we established that spectral projections $P$ of $\square$ take $L^2(M)$ to $H^s(M)$ for all $s>0$. It follows that $P$ can be extended so that  $P:H^{-s}(\bar M)\to L^2(M)$. Since $P^2=P$ on $H^\infty(M)\subset L^2(M)$, a dense subspace of all the $H^s(\bar M)$, $(s\in\mathbb R)$ we conclude that $P:H^{-s}(\bar M)\to H^s(M)$ for all $s>0$. \end{proof}

             \section{The $G$-Fredholm Property of $\square$}

\noindent
We will need a description of $G$-operators in terms of their Schwartz kernels.  If $P\in \mathcal B(L^{2}(M))^{G}$, its kernel $K_{P}$ satisfies
\[ K_P({\bf x},{\bf y})= K_P({\bf x}t,{\bf y}t),\quad t\in G.\]
\noindent
Thus $K_{P}$ descends to a distribution on the quotient $\frac{M\times M}{G}$.  The measure taken on $\frac{M\times M}{G}$ is simply the quotient measure.
 
\begin{lemma}\label{kersmooth} If $P: L^2(M) \to H^{\infty}(M)$ is a self-adjoint projection, then its Schwartz kernel $K_P$ is smooth. \end{lemma}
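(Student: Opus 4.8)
The plan is to identify the $x$-section of the kernel with $P$ applied to a point mass, and then to upgrade this to joint smoothness via a Sobolev embedding with large loss, exactly as in the discrete group case (\cite{GHS}). First, by the closed graph theorem $P\colon L^2(M)\to H^s(M)$ is bounded for every $s\ge 0$. The very argument used to prove Corollary \ref{applied} — take the $L^2$-adjoint of $P\colon L^2\to H^s$, which is again $P$ by self-adjointness, so that $P$ extends to a bounded map $H^{-s}(\bar M)\to L^2(M)$, and then compose with $P\colon L^2\to H^s$ using $P^2=P$ — shows that $P$ extends to a bounded operator $H^{-s}(\bar M)\to H^s(M)$ for every $s>0$. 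Thus $P$ is smoothing of infinite order in both directions of the Sobolev scale.

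Next, for an interior point $x\in M$ let $\delta_x\in H^{-s}(\bar M)$ (any $s>n/2$) be the evaluation functional $\phi\mapsto\phi(x)$. Working in geodesic normal coordinates of a fixed radius, available since $M$ has bounded geometry, a routine Fourier-transform estimate shows that $x\mapsto\delta_x$ is a $C^\infty$ map from $M$ into $H^{-\infty}(\bar M)=\bigcup_s H^{-s}(\bar M)$; precisely, for each nonnegative integer $m$ it is $C^m$ into $H^{-s}(\bar M)$ as soon as $s>n/2+m$. Composing with the mapping property of $P$ established above, $x\mapsto P\delta_x$ is a $C^\infty$ map from $M$ into $H^s(M)$ for every $s>0$. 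Now the $y$-section $y\mapsto K_P(x,y)$ coincides, up to the usual conjugation, with $P\delta_x$: unwinding the Schwartz kernel theorem one has $\int_M K_P(x,y)\phi(y)\,dy=(P\phi)(x)=\delta_x(P\phi)$ for all $\phi\in C_c^\infty(M)$, and by self-adjointness this functional of $\phi$ is represented in $L^2(M)$ by $P\delta_x$. Hence $x\mapsto K_P(x,\cdot)$ is $C^\infty$ from $M$ into $H^s(M)$ for every $s$.

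Finally I would choose $s>n/2+l$ so that $H^s(M)\hookrightarrow C^l(\bar M)$, again using bounded geometry; then $x\mapsto K_P(x,\cdot)$ is $C^m$ into $C^l(\bar M)$ for all $m$ and $l$, which says exactly that $\partial_x^\alpha\partial_y^\beta K_P$ is jointly continuous on $M\times M$ for all multi-indices $\alpha,\beta$. Therefore $K_P\in C^\infty(M\times M)$ — the same argument carried up to the boundary (the functional $\delta_x$ lies in $H^{-s}(\bar M)$ also for $x\in bM$) gives $K_P\in C^\infty(\bar M\times\bar M)$ — and when $P$ is $G$-invariant it descends to a smooth function on $\frac{M\times M}{G}$.

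The one real obstacle is the passage from separate smoothness of $K_P$ in each variable — which is immediate, since $P\phi\in H^\infty(M)\subset C^\infty$ already forces every section to be smooth — to joint smoothness. The device above, viewing the kernel as a Sobolev-valued function of $x$ that $P$ regularizes by an arbitrarily large amount and then absorbing the $y$-derivatives through a crude Sobolev embedding, is precisely what bridges that gap. A secondary point requiring care is that all the Sobolev embeddings and the smoothness of $x\mapsto\delta_x$ must hold uniformly over the noncompact $M$; this is exactly what the bounded-geometry structure of the $G$-manifold (with $M/G$ compact) supplies.
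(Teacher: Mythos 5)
Your proposal is correct and follows essentially the same route as the paper: the paper's proof also identifies the kernel via $({\bf x},{\bf y})\mapsto (P\delta_{\bf y})({\bf x})=K_P({\bf x},{\bf y})$, using that ${\bf y}\mapsto\delta_{\bf y}$ is smooth into $H^{-\infty}_c(\bar M)$ and that $P$ extends to a map $H^{-s}\to H^s$ (the duality-plus-idempotence argument you spell out, which the paper records separately as Corollary \ref{applied}). Your write-up merely makes explicit the Sobolev-embedding and bounded-geometry details that the paper leaves implicit.
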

 \begin{proof} Since ${\bf y}\mapsto\delta_{\bf y}$ is a smooth function on $\bar M$ with values in $H^{-\infty}_{c}(\bar M)$, the composition
  \[({\bf x},{\bf y})\longmapsto (P\delta_{\bf y})({\bf x}) = \int_{M} K_P({\bf x},{\bf z})\delta_{\bf y}({\bf z})d{\bf z} = K_P({\bf x},{\bf y})\]
 \noindent 
is jointly smooth.  \end{proof} 

 \begin{lemma}\label{kerint} If $P\in \mathcal B(L^{2}(M))^{G}$ is a self-adjoint, invariant projection so that ${\rm im}(P)\subset H^{\infty}(M)$, then $K_{P}\in L^{2}(\frac{M\times M}{G})$.\end{lemma}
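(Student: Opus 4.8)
The plan is to show that the Schwartz kernel $K_P$ is square-integrable on the quotient $\frac{M\times M}{G}$ by exploiting the smoothness of $K_P$ (from Lemma \ref{kersmooth}) together with the compactness of $\bar X = \bar M/G$ and the fact that $P$ is a projection with image in $H^\infty$. First I would fix a fundamental domain, or rather work directly on the quotient: choose finitely many relatively compact open sets $U_1,\dots,U_N$ in $\bar M$ whose images cover $\bar X$ (possible since $\bar X$ is compact), and a subordinate $G$-invariant partition of unity. The essential point is that $K_P({\bf x},{\bf y})$, being the kernel of an honest projection onto $\im(P)\subset H^\infty(M)$, restricts to a smooth function, so its $L^2$ norm over the compact piece $\frac{U_i\times U_j}{G}$ (for the relevant diagonal-type pieces) is finite; one must be careful that the ${\bf y}$-variable ranges over all of $\bar M$, not just a compact set, so the naive bound is not immediately finite.

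To handle the non-compactness in the second variable I would use the projection identity $P = P^2 = P^*P$, which gives
\[ K_P({\bf x},{\bf y}) = \int_M K_P({\bf x},{\bf z})\,\overline{K_P({\bf y},{\bf z})}\,d{\bf z} = \langle K_P({\bf x},\cdot), K_P({\bf y},\cdot)\rangle_{L^2(M)}. \]
In particular $\int_M |K_P({\bf x},{\bf z})|^2\,d{\bf z} = K_P({\bf x},{\bf x})$, the value of the smooth kernel on the diagonal. Since the diagonal $\frac{\Delta_M}{G} \cong \bar X$ is compact and $K_P$ is smooth (hence bounded on compact sets), $\sup_{{\bf x}} K_P({\bf x},{\bf x}) =: C_0 < \infty$ by $G$-invariance of $K_P$ along the diagonal. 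Then by Cauchy--Schwarz,
\[ |K_P({\bf x},{\bf y})|^2 \le K_P({\bf x},{\bf x})\,K_P({\bf y},{\bf y}) \le C_0^2, \]
so $K_P$ is in fact a bounded function, and moreover $\int_M |K_P({\bf x},{\bf y})|^2\,d{\bf y} = K_P({\bf x},{\bf x}) \le C_0$ for every ${\bf x}$.

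Now I would integrate over the quotient: write
\[ \int_{\frac{M\times M}{G}} |K_P({\bf x},{\bf y})|^2 = \int_{\bar X} \left( \int_M |K_P({\bf x},{\bf y})|^2\,d{\bf y} \right) dx \le \int_{\bar X} K_P({\bf x},{\bf x})\,dx \le C_0\,\mathrm{vol}(\bar X) < \infty, \]
where the inner integral over the full fiber direction has already been absorbed by the identity above (the $G$-invariance $K_P({\bf x}t,{\bf y}t)=K_P({\bf x},{\bf y})$ makes $\int_M |K_P({\bf x},{\bf y})|^2\,d{\bf y}$ a $G$-invariant function of ${\bf x}$, hence a function on $\bar X$, equal to $K_P({\bf x},{\bf x})$), and $\bar X$ is compact. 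This shows $K_P \in L^2(\frac{M\times M}{G})$.

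The main obstacle is the rigorous justification of the identity $\int_M |K_P({\bf x},{\bf z})|^2\,d{\bf z} = K_P({\bf x},{\bf x})$ and of Cauchy--Schwarz at the level of kernels: a priori $K_P({\bf x},\cdot)$ need only be a distribution, so one must first use $\im(P)\subset H^\infty(M)$ and Lemma \ref{kersmooth} to know $K_P$ is a genuine smooth function, and then argue that for fixed ${\bf x}$ the section $K_P({\bf x},\cdot)$ lies in $L^2(M)$ — this is exactly the statement $P\delta_{\bf x}\in L^2(M)$, which follows because $P$ extends to a bounded map $H^{-s}(\bar M)\to H^s(M)\hookrightarrow L^2(M)$ by (the bundle-valued analogue of) Corollary \ref{applied}, and $\delta_{\bf x}\in H^{-s}(\bar M)$ for $s$ large. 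Once $K_P({\bf x},\cdot)\in L^2(M)$ is established, the projection property $P=P^*P$ combined with reproducing-kernel bookkeeping gives the pointwise bound and the diagonal identity, and compactness of $\bar X$ closes the argument. I would also remark that the same computation shows $\mathrm{Tr}_G(P) = \int_{\bar X} K_P({\bf x},{\bf x})\,dx < \infty$, which is the payoff used in the sequel to bound $\dim_G$.
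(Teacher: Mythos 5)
Your proof is correct, and it shares the paper's overall skeleton: show that the fiberwise norm $\phi({\bf x})=\int_M|K_P({\bf x},{\bf y})|^2\,d{\bf y}$ is finite, observe it is $G$-invariant and hence descends to the compact $\bar X$, and integrate. The difference lies in how the two arguments control $\phi$. The paper establishes finiteness of $\phi({\bf x})$ for each ${\bf x}$ by the closed graph theorem plus the Riesz representation theorem (identifying $K_P({\bf x},\cdot)$ with the representing vector of $u\mapsto (Pu)({\bf x})$), and then gets integrability over $\bar X$ from \emph{continuity} of $\phi$, which it deduces from continuity of ${\bf y}\mapsto P\delta_{\bf y}\in L^2(M)$. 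You instead invoke the reproducing-kernel identity coming from $P=P^*P$, namely $\int_M|K_P({\bf x},{\bf z})|^2\,d{\bf z}=K_P({\bf x},{\bf x})$, so that $\phi$ is the diagonal restriction of the smooth, $G$-invariant kernel and is therefore bounded on the compact quotient; this also hands you the pointwise bound $|K_P({\bf x},{\bf y})|^2\le K_P({\bf x},{\bf x})K_P({\bf y},{\bf y})$ and the formula $\mathrm{Tr}_G(P)=\int_{\bar X}K_P({\bf x},{\bf x})\,dx$, which is a pleasant explicit form of what Lemma \ref{normkappa} delivers abstractly. Your preliminary step --- justifying $K_P({\bf x},\cdot)=P\delta_{\bf x}\in L^2(M)$ via the extension $P:H^{-s}(\bar M)\to H^s(M)$ --- is the right thing to worry about and is legitimate (the argument of Corollary \ref{applied} uses only self-adjointness and $\im(P)\subset H^\infty$, not that $P$ is a spectral projection of $\square$), though the paper gets the same fact more cheaply from the closed graph theorem. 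Either route is sound; yours buys a uniform pointwise bound on $K_P$ that the paper does not need, at the cost of having to justify the kernel composition $P=P^2$ at the level of distributions.
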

 \begin{proof}Fix ${\bf x}\in \bar M$.  If $P:L^{2}(M)\to C^{\infty}(\bar M)$, the closed graph theorem applied to $P$ implies $u\in L^{2}(M)\mapsto (Pu)({\bf x}) \in \mathbb C$ is a bounded linear functional.  The Riesz representation theorem then gives that there exists a function $h_{\bf x}\in L^{2}(M)$ so that
\[(Pu)({\bf x}) = \langle h_{\bf x},u\rangle \quad u\in L^{2}(M).\]
Since $(Pu)({\bf x})=\int_{M}K_{P}({\bf x},{\bf y})u({\bf y})d{\bf y}$, and agrees with $\langle h_{\bf x},u\rangle$ when $u$ has compact support, $h_{\bf x}=K_{P}({\bf x},\ \cdot \ )$ almost everywhere.  We conclude that for any ${\bf x}\in\bar M$, $\int_{M}|K_{P}({\bf x},{\bf y})|^{2}d{\bf y}<\infty$.  

Now consider $\phi({\bf x})=\int_{M}|K_{P}({\bf x},{\bf y})|^{2}d{\bf y}$.  The function $\phi$ is constant on orbits since the measure on $M$ is invariant;
\[\phi({\bf x}t)=\int_{M}|K_{P}({\bf x}t,{\bf y})|^{2}d{\bf y}=\int_{M}|K_{P}({\bf x},{\bf y}t^{-1})|^{2}d{\bf y}=\int_{M}|K_{P}({\bf x},{\bf y})|^{2}d{\bf y}=\phi({\bf x}).\]
\noindent
Thus $\phi$ descends to a function on $\bar M/G=\bar X$.  Since the map from $\bar M$ to $H^{-\infty}_c(\bar M)$ defined by ${\bf y}\mapsto \delta_{\bf y}$ is continuous, the composition
\[{\bf y} \mapsto P\delta_{\bf y}= K_P(\cdot, {\bf y})\]
is a continuous function $\bar M\to L^2(M)$.  We may conclude that $\phi:\bar X\to \mathbb R_{+}$ is continuous.  Denote by $\frac{d{\bf x}}{dt}$ the quotient measure on $X$.  The compactness of $\bar X$ together with continuity of $\phi$ imply that $\int_{X}\phi({\bf x})\frac{d{\bf x}}{dt}<\infty$. Thus we have that $K_{P}\in L^{2}(\frac{M\times M}{G})$.
 \end{proof}

Choosing a measurable global section $x$ in $M$ and representing points ${\bf x}\in M$, ${\bf x}\to (t,x)\in G\times X$, we obtain an isomorphism of measure spaces $(M,d{\bf x})\cong (G\times X, dt\otimes dx)$.  Whenever $P\in \mathcal B(L^{2}(M))^{G}$ and $K_{P}\in L^{2}_{\rm loc}(M\times M)$, this isomorphism and the criterion for invariance allow a representation
\[K_P({\bf x},{\bf y})\longrightarrow K_{P}(t,x;s,y) \stackrel {\rm def} =\kappa(ts^{-1};x,y),\quad s,t\in G, \ x,y \in X\]
\noindent
with $\kappa \in L^{2}_{\rm loc}(G\times X \times X).$
 
 \begin{lemma}\label{normkappa} Let $P\in \mathcal B(L^{2}(M))^{G}$.  Then ${\rm Tr}_{G}(P^{*}P)= \int_{\frac{M\times M}{G}}|K_{P}|^{2}$.\end{lemma}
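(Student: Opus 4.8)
The plan is to compute the $G$-trace of $P^*P$ directly using the matrix-decomposition definition of ${\rm Tr}_G$ and to recognize the resulting expression as the $L^2$-norm of the kernel on the quotient. First I would use the identification $(M,d{\bf x})\cong(G\times X,dt\otimes dx)$ and the corresponding $L^2(M)\cong L^2(G)\otimes L^2(X)\cong\bigoplus_l L^2(G)\otimes\psi_l$ for an orthonormal basis $(\psi_l)$ of $L^2(X)$. With respect to this decomposition, write $A=P^*P$; it is positive and invariant, so by definition ${\rm Tr}_G(A)=\sum_l {\rm tr}_G(A_{ll})$ where $A_{ll}=P_l A P_l$ is a right-invariant operator $L_{h_{ll}}$ on $L^2(G)$. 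The key point is to identify the diagonal matrix elements of $P^*P$ in terms of the kernel $\kappa$ of $P$: if $P$ has kernel $K_P({\bf x},{\bf y})=\kappa(ts^{-1};x,y)$, then $P^*$ has kernel $\overline{K_P({\bf y},{\bf x})}=\overline{\kappa(st^{-1};y,x)}$, and composing gives that $P^*P$ has kernel $\int_M \overline{K_P({\bf z},{\bf x})}K_P({\bf z},{\bf y})\,d{\bf z}$, which by invariance descends to a function $\mu(ts^{-1};x,y)$ on $G\times X\times X$ with $\mu(\cdot;x,x)$ continuous (by Lemmata \ref{kersmooth}–\ref{kerint} applied to $P$, noting ${\rm im}(P)\subset H^\infty(M)$).

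Next I would compute ${\rm tr}_G(A_{ll})$. The matrix element $A_{ll}$ acts on $u\in L^2(G)$ by $(A_{ll}u)(t)=\int_X\int_X\int_G \overline{\psi_l(x)}\,\mu(ts^{-1};x,y)\,\psi_l(y)\,u(s)\,ds\,dx\,dy$, so it is $L_{h_{ll}}$ with $h_{ll}(g)=\int_X\int_X\overline{\psi_l(x)}\,\mu(g;x,y)\,\psi_l(y)\,dx\,dy$ — more precisely $h_{ll}(g)=\langle \mu(g;\cdot,\cdot)\psi_l,\psi_l\rangle$ viewing $\mu(g;\cdot,\cdot)$ as the operator on $L^2(X)$ with that kernel. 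Since $A=P^*P\ge 0$ is in ${\rm Dom}({\rm tr}_G)$ (it is of the form $L_{\tilde f}L_f$ at the level of each matrix block, or one argues via positivity and the defining property), the trace formula ${\rm tr}_G(L_h)=h(e)$ applies, giving ${\rm tr}_G(A_{ll})=h_{ll}(e)=\langle \mu(e;\cdot,\cdot)\psi_l,\psi_l\rangle$. Summing over $l$ and using that $(\psi_l)$ is an orthonormal basis yields $\sum_l h_{ll}(e)={\rm Tr}_{L^2(X)}$ of the operator with kernel $\mu(e;x,y)$, which is $\int_X \mu(e;x,x)\,dx$ (Mercer-type identity, justified since $\mu(e;\cdot,\cdot)$ is a continuous nonnegative kernel on compact $X$). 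Finally, $\mu(e;x,x)=\int_M|K_P({\bf z},{\bf x})|^2\,d{\bf z}$ evaluated along the section, so $\int_X\mu(e;x,x)\,dx=\int_{X}\int_G|\kappa(g;x,x)|^2\,\text{(appropriately)}\ldots$ — the cleanest way to write the endpoint is simply to observe that $\int_X\mu(e;x,x)\,dx=\int_{\frac{M\times M}{G}}|K_P|^2$ by the very definition of the quotient measure and of $\mu$.

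I expect the main obstacle to be the bookkeeping of measures and the justification of the interchanges of integration — in particular verifying that $A=P^*P$ genuinely lies in ${\rm Dom}({\rm tr}_G)$ so that ${\rm tr}_G(A_{ll})=h_{ll}(e)$ is legitimate (the diagonal evaluation of a convolution kernel), and verifying the Mercer-type step $\sum_l\langle K\psi_l,\psi_l\rangle=\int_X K(x,x)\,dx$ for the continuous kernel $\mu(e;\cdot,\cdot)$ on the compact base. Both of these rest on the regularity already obtained: $K_P$ is smooth (Lemma \ref{kersmooth}) and square-integrable on the quotient (Lemma \ref{kerint}), and $\mu(e;\cdot,\cdot)$ is continuous and nonnegative on $\bar X\times\bar X$, which is compact; positivity of $P^*P$ makes the swapped sums and integrals all sums of nonnegative terms, so Tonelli's theorem applies and no conditional-convergence subtleties arise. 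Once those two points are in hand the identity ${\rm Tr}_G(P^*P)=\int_{\frac{M\times M}{G}}|K_P|^2$ falls out, with both sides possibly equal to $+\infty$ simultaneously, which is the form in which the lemma will be used.
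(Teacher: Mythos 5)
Your argument is correct in substance but follows a genuinely different route from the paper's. You form the composed kernel of $P^*P$, descend it to a function $\mu(ts^{-1};x,y)$, and evaluate on the diagonal via ${\rm tr}_G(L_h)=h(e)$ together with a Mercer-type identity $\sum_l\langle \mu(e;\cdot,\cdot)\psi_l,\psi_l\rangle=\int_X\mu(e;x,x)\,dx$. The paper never composes kernels: it takes the matrix elements $h_{ij}$ of $P$ \emph{itself} in the decomposition $L^2(M)\cong\bigoplus_l L^2(G)\otimes\psi_l$, observes that $h_{ij}=\int_{X\times X}\kappa(\cdot\,;x,y)\psi_j(y)\bar\psi_i(x)\,dx\,dy$ is exactly the $(i,j)$-th Fourier coefficient of $\kappa$ with respect to the orthonormal basis $(\bar\psi_i\otimes\psi_j)$ of $L^2(X\times X)$, and then concludes by Parseval that $\sum_{ij}\|h_{ij}\|^2_{L^2(G)}=\|\kappa\|^2_{L^2(G\times X\times X)}$, while ${\rm Tr}_G(P^*P)=\sum_{ij}\|h_{ij}\|^2_{L^2(G)}$ follows directly from the defining identity ${\rm tr}_G(L_f^*L_f)=\|f\|^2_{L^2(G)}$. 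The paper's route buys two things you have to work for: every step is a sum of nonnegative terms (so Tonelli/monotone convergence is automatic and both sides may be simultaneously infinite), and no regularity of the kernel is needed beyond $\kappa\in L^2_{\rm loc}$. Your route, by contrast, leans on Lemmata \ref{kersmooth} and \ref{kerint} to get continuity of $\mu(e;\cdot,\cdot)$ for the Mercer step, i.e.\ on the hypothesis ${\rm im}(P)\subset H^\infty(M)$, which is not part of the statement of Lemma \ref{normkappa} (it holds in the one place the lemma is applied, Corollary \ref{theworks}, so this is a loss of generality rather than an error); you also need to justify that the diagonal block $A_{ll}=\sum_k L_{\tilde h_{kl}}L_{h_{kl}}$, an \emph{infinite} sum, lies in ${\rm Dom}({\rm tr}_G)$ with ${\rm tr}_G(A_{ll})=h_{ll}(e)$, whereas the quoted fact about ${\rm tr}_G(L_h)=h(e)$ is stated only for finite sums $\sum_1^N L_{\tilde f_k}L_{g_k}$ with $f_k,g_k\in L^2(G)$ — positivity and normality of the trace close this, but it should be said. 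Note also that the continuity appeal in the Mercer step can be avoided entirely by writing $\sum_l\langle\mu(e)\psi_l,\psi_l\rangle=\int_G\sum_l\|T_g\psi_l\|^2\,dg=\int_G\|\kappa(g;\cdot,\cdot)\|^2_{L^2(X\times X)}\,dg$ with $T_g$ the operator on $L^2(X)$ with kernel $\kappa(g;\cdot,\cdot)$, which would bring your argument back to the same generality as the paper's.
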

 \begin{proof} Let $(\psi_{k})_{k}$ be an orthonormal basis for $L^{2}(X)$.  In the decomposition $L^{2}(M)\cong \bigoplus_{k}L^{2}(G)\otimes \psi_{k}$, the invariant operator $P$ has a matrix representation $P\to [L_{h_{kl}}]_{kl}$.  In terms of this, we compute ${\rm Tr}_{G}(P^{*}P)=\sum_{kl}\|h_{kl}\|_{L^{2}(G)}^{2}$.
 
Now, except on a set of measure zero, we have a description of $P$  
\[(Pu)({\bf x})=\int_{M}K_{P}({\bf x},{\bf y})u({\bf  y})d{\bf y}=(Pu)(t,x)=\int_{G\times X} ds dy\ \kappa(s;x,y)u(st,y).\]
The distributional kernels $h_{ij}$ can be recovered from $\kappa$ by projecting into the summands in $L^{2}(M)\cong \bigoplus_{l} (L^{2}(G)\otimes \psi_{l})$,
\[h_{ij} = \int_{X\times X} \ dxdy\ \kappa(\ \cdot \ ;x,y)\psi_{j}(y)\bar\psi_{i}(x).\]
Let us compute the norm of $\kappa$ in $L^{2}(G\times X\times X)$.  Since $(\psi_{j})_{j}$ is an orthonormal basis for $L^{2}(X)$, the set $(\bar \psi_{i}\otimes \psi_{j})_{ij}$ forms an orthonormal basis for $L^{2}(X\times X)$.  By construction, $h_{ij}$ is equal the $ij^{th}$ Fourier coefficient of $\kappa$ with respect to the decomposition $L^{2}(G\times X\times X)\cong \bigoplus_{ij}(L^{2}(G)\otimes \psi_{i}\otimes \psi_{j})$.  Hence $\sum_{ij} \|h_{ij}\|^{2}_{L^{2}(G)}=\|\kappa \|_{L^{2}(G\times X\times X)}^{2}$ and ${\rm Tr}_{G}(P^{*}P)=\|\kappa \|_{L^{2}(G\times X\times X)}^{2} = \int_{\frac{M\times M}{G}}|K_{P}({\bf x},{\bf y})|^{2}\ \frac{d{\bf x}d{\bf y}}{dt}$.\end{proof}
 
 \begin{corollary}\label{theworks} If $P\in \mathcal B(L^{2}(M))^{G}$ is an invariant self-adjoint projection such that ${\rm im}(P)\subset H^{\infty}(M)$, then ${\rm Tr}_{G}(P)<\infty$.\end{corollary}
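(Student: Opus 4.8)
The plan is to reduce the statement directly to the two preceding lemmata, since almost all of the work has already been done. Because $P$ is a self-adjoint projection we have $P=P^{*}=P^{2}$, hence $P^{*}P=P$, and therefore $\mathrm{Tr}_{G}(P)=\mathrm{Tr}_{G}(P^{*}P)$. It thus suffices to show that $\mathrm{Tr}_{G}(P^{*}P)<\infty$.

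First I would invoke Lemma \ref{kerint}: since $P\in\mathcal B(L^{2}(M))^{G}$ is an invariant self-adjoint projection with ${\rm im}(P)\subset H^{\infty}(M)$, its Schwartz kernel $K_{P}$ descends to an element of $L^{2}(\frac{M\times M}{G})$. (Lemma \ref{kersmooth} additionally tells us $K_{P}$ is smooth, so in particular $K_{P}\in L^{2}_{\rm loc}(M\times M)$ and the representation $K_{P}\leftrightarrow\kappa\in L^{2}_{\rm loc}(G\times X\times X)$ used in Lemma \ref{normkappa} is legitimate.) Then I would apply Lemma \ref{normkappa} to $P$, which yields $\mathrm{Tr}_{G}(P^{*}P)=\int_{\frac{M\times M}{G}}|K_{P}|^{2}$. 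Combining the two, $\mathrm{Tr}_{G}(P)=\int_{\frac{M\times M}{G}}|K_{P}|^{2}<\infty$, which is the assertion.

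There is essentially no obstacle internal to this corollary: the only point needing a moment's attention is that Lemma \ref{normkappa}'s argument presupposes enough local regularity of $K_{P}$ for the kernel $\kappa$ and the matrix elements $h_{ij}$ to be genuine $L^{2}$-objects, and this is exactly what the hypothesis ${\rm im}(P)\subset H^{\infty}(M)$ buys us via Lemmata \ref{kersmooth} and \ref{kerint}. So the real content — the closed-graph/Riesz-representation estimate producing $K_{P}\in L^{2}(\frac{M\times M}{G})$, together with the Fourier-coefficient identity $\sum_{ij}\|h_{ij}\|^{2}_{L^{2}(G)}=\|\kappa\|^{2}_{L^{2}(G\times X\times X)}$ — has already been absorbed into the earlier lemmata, and the corollary is merely the observation that $P=P^{*}P$ for a self-adjoint projection.
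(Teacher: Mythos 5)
Your proposal is correct and is exactly the argument the paper intends (the corollary is stated without proof precisely because it follows by combining $P=P^{*}P$ with Lemmata \ref{kerint} and \ref{normkappa}). Nothing to add.
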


\begin{rem}{\rm All the previous results extend trivially to operators acting in bundles.} \end{rem}

\begin{theorem}For $q>0$, the operator $\square$ on $M$ is $G$-Fredholm.\end{theorem}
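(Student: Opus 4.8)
The plan is to exploit the spectral projections of $\square$, following the strategy of \cite{GHS, P1}. Fix $\delta>0$ and let $P=P_\delta=\int_0^\delta dE_\lambda$ be the spectral projection of $\square$ onto the part of its spectrum lying in $[0,\delta]$. Since $\square$ is assembled from $\dbar$ and its Hilbert space adjoint, both of which commute with the (holomorphic, isometric) action of $G$, the operator $\square$ is a closed, densely defined, $G$-invariant self-adjoint operator in $L^2(M,\Lambda^{p,q})$, and $P\in\mathcal B(L^2(M,\Lambda^{p,q}))^G$ is an invariant self-adjoint projection. Thus the two bullets of the definition of the $G$-Fredholm property are the only things to check.

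First I would verify the kernel condition. Because $q>0$, Corollary \ref{bomb} gives $\im P\subset H^\infty(M,\Lambda^{p,q})$, so Corollary \ref{theworks} (applied in the bundle $\Lambda^{p,q}$, as permitted by the remark following it) yields ${\rm Tr}_G(P)<\infty$, i.e. $\dim_G\im P<\infty$. Since $\ker\square=\im E_{\{0\}}\subset\im P_\delta$, monotonicity of $\dim_G$ gives $\dim_G\ker\square\le\dim_G\im P<\infty$, which is the first condition.

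Next I would produce the subspace required by the second bullet. Put $Q=\im({\bf 1}-P)$, a closed, $G$-invariant subspace with $L^2(M,\Lambda^{p,q})\ominus Q=\im P$, so ${\rm codim}_G Q=\dim_G\im P<\infty$ by the previous paragraph. Since $P$ commutes with $\square$, the restriction of $\square$ to $Q$ is a self-adjoint operator whose spectrum lies in $[\delta,\infty)$; hence $\square|_{\dom\square\cap Q}\colon \dom\square\cap Q\to Q$ is a bijection with bounded inverse (a ``partial Neumann operator'' of norm $\le 1/\delta$), and in particular $Q\subset\im\square$. This establishes the second condition and completes the verification that $\square$ is $G$-Fredholm.

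The substance of the argument is entirely in the two inputs $\im P\subset H^\infty$ (Corollary \ref{bomb}, which rests on the subelliptic {\it a priori} estimates of Section 4 and crucially on $q>0$) and $\dim_G\im P<\infty$ (Corollary \ref{theworks}, via the $L^2$-bound on the Schwartz kernel over $\frac{M\times M}{G}$ from Lemma \ref{kerint}); once these are granted, the conclusion is a formal consequence of the spectral theorem, so I do not expect any genuinely new obstacle at this stage — the one point requiring mild care is the verification that the restriction of $\square$ to $Q$ has bounded inverse, which is routine spectral theory.
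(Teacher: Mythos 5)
Your proposal is correct and follows essentially the same route as the paper: the same spectral projection $P=\int_0^\delta dE_\lambda$, with Corollary \ref{bomb} giving $\im P\subset H^\infty$, Corollary \ref{theworks} giving $\dim_G\im P<\infty$, and $Q=\im({\bf 1}-P)\subset\im\square$ together with $\ker\square\subset\im P$ yielding both bullets of the definition. The only difference is that you spell out the routine spectral-theory justification that $\square$ restricted to $\im({\bf 1}-P)$ has bounded inverse, which the paper leaves implicit.
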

\begin{proof} Let $\square =\int_{0}^{\infty}\lambda dE_{\lambda}$ be the spectral decomposition of $\square$ and for $\delta >0$, $P=\int_{0}^{\delta}dE_{\lambda}$.  Thus $\im(1-P)\subset\im\square$.  Further, $\im P\subset L^{2}(M,\Lambda^{p,q})$ is closed, invariant and, by Corollary \ref{bomb}, $\im P\subset H^{\infty}(M,\Lambda^{p,q})$.  Corollary \ref{theworks} implies that ${\rm codim}_{G}(\im(1-P))<\infty$.  The requirement on the kernel of $\square$ is verified noting that $\ker(\square) \subset\im P$ in the above.\end{proof}
\begin{corollary} If $q>0$, $\dim_G  L^{2}\bar H^{p,q}(M) < \infty$.                        
\end{corollary}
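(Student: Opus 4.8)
The plan is to deduce this immediately from the Hodge-type decomposition of Lemma~\ref{decomp} together with the $G$-Fredholm property of $\square_{p,q}$ just established, so that the work is entirely bookkeeping about $G$-equivariance.

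First I would invoke Lemma~\ref{decomp}, which gives the orthogonal decomposition $\ker\dbar_{p,q}=\overline{\im\dbar_{p,q-1}}\oplus\ker\square_{p,q}$ inside $L^2(M,\Lambda^{p,q})$. Since $\ker\dbar$ is closed, the reduced cohomology $L^2\bar H^{p,q}(M)=\ker\dbar_{p,q}/\overline{\im\dbar_{p,q-1}}$ is a Hilbert space, and this decomposition identifies it isometrically with $\ker\square_{p,q}$. All the operators in sight ($\dbar$, $\dbar^*$, hence $\square$) commute with the unitary $G$-action, so each of the three summands is a closed $G$-invariant subspace and the identification $L^2\bar H^{p,q}(M)\cong\ker\square_{p,q}$ is an isomorphism of unitary $G$-modules. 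This is exactly what is recorded just after Lemma~\ref{decomp}, so that step requires no new argument.

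Then I would apply the preceding theorem: for $q>0$, $\square=\square_{p,q}$ is $G$-Fredholm, so by the first clause of the definition of $G$-Fredholm, $\dim_G\ker\square_{p,q}<\infty$. Unwinding that clause, $\ker\square_{p,q}\subset\im P$, where $P=\int_0^\delta dE_\lambda$ is a low-energy spectral projection of $\square$ with $\im P\subset H^\infty(M,\Lambda^{p,q})$ by Corollary~\ref{bomb}, and ${\rm Tr}_G P<\infty$ by Corollary~\ref{theworks}. Combining, $\dim_G L^2\bar H^{p,q}(M)=\dim_G\ker\square_{p,q}<\infty$, which is the claim. There is no real obstacle beyond what is already proved; the only subtlety — and it is precisely the reason one works with \emph{reduced} cohomology — is that $\im\dbar_{p,q-1}$ need not be closed, so one must pass to its closure and use $\ker\square_{p,q}$ as the space of harmonic representatives rather than attempting to estimate the unreduced Dolbeault cohomology directly.
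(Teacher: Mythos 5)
Your proposal is correct and follows essentially the same route as the paper: identify $L^2\bar H^{p,q}(M)$ with $\ker\square_{p,q}$ via Lemma \ref{decomp}, then conclude finiteness of the $G$-dimension from the $G$-Fredholm property of $\square$ (equivalently, from $\ker\square\subset\im P$ with ${\rm Tr}_G P<\infty$). No gaps.
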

\begin{proof} By Lemma \ref{decomp} $L^{2}\bar H^{p,q}(M) = \ker(\square_{p,q}) = \im( E_{0})$ which has finite $G$-dimension.\end{proof}

\section{The $G$-Fredholm Property of $\square_b$}

In Theorem 5.4.9, \cite{FK} it is established that condition $Y(q)$ implies that
\[\|\phi\|_{H^{\epsilon-1}(bM)}^2\lesssim Q_b(\phi,\phi)\quad (\phi\in\mathcal B^{p,q})\]
\noindent
with $\epsilon=1/2$. It follows that for the boundary Laplacian, all the results from the preceding discussions go through without change. More generally, it is sufficient for subelliptic estimates of the form above to hold, with and $\epsilon>0$. In \cite{K2, K3}, it is shown that both for domains and CR manifolds, finite ideal $q$-type implies that subellipticity holds for $(p, q)$-forms.

\section{Applications}

\subsection{The $\dbar$- and $\dbar_b$-Neumann problems} The $G$-Fredholm property of $\square$ and $\square_b$ imply that we can sometimes solve the problems 
\begin{equation}\label{dbareqs}\dbar u = f \qquad \dbar_b u = f\end{equation}
\noindent
analogously to the compact case. Results from Sects. 2 and 3 of \cite{P2} give conditions guaranteeing solvability of these equations in some closed, $G$-invariant subspaces of $L^2(M)$. For example, if $L$ is such a subspace of $L^2(M)$ containing a form with compact support (even only in one component), then by Cor. 2.3 and Lemmata 3.5, 3.6 of \cite{P2}, we have $\dim_G L=\infty$ and so `most' elements $f\in L$ are good right-hand-sides for \eqref{dbareqs}. Similarly, if $L$ contains an essentially unbounded function which has bounded $L^2(G)$-norms on slices, then also $\dim_G L=\infty$, by Lemma 2.4 of the same article. 

\subsection{Levi Problem} In \cite{P2} we used the weaker results of \cite{P1} to give sufficient conditions for the solution of the Levi problem on strongly pseudoconvex $M$. It would be interesting to see if the results of \cite{BF} carry over to the present setting and whether the condition of amenability, introduced in \cite{P2} remains of importance here.

\section{Discrete cocompact group case}

\subsection{Criteria for finite $G$-dimensionality} In Lemmata \ref{kersmooth}, \ref{kerint} we deduced the finite $G$-dimensionality of closed, invariant subspaces $L\subset L^2(M)$ assuming that $L\subset H^s(M)$ for arbitrarily large $s$. In the event that the group in question has a discrete, cocompact subgroup, we need much less. We have not yet been able to find an application for this fact as even in the subelliptic case we have the strong hypotheses satisfied, as in Corollary \ref{applied}. Still, it seems of interest to ask whether the presence of such subgroups is important for the theorem to hold with the weaker hypotheses of $\epsilon$-gain or whether a proof exists that the theorem is true anyway. A possible characterization is in \cite{MS}, that of a central extension of a compact group. There the right- and left-invariant Sobolev spaces coincide.

\begin{example}\label{reals}{\rm Let us work out the analogue of Lemmata \ref{kersmooth} and \ref{kerint} completely and explicitly for an easy group. Let $G=\mathbb R$, and consider $L$, a closed, translation-invariant subspace of $L^{2}(\mathbb R)$ satisfying a condition $\|u\|_{\epsilon}\le C\|u\|_{0}$ for $u \in L$. As we have mentioned before, a projection $P: L^2(\mathbb R)\to L^2(\mathbb R)$ onto $L$ would take the form $u\mapsto Pu = h\ast u$ for some $h$, a distribution on $\mathbb R$. Taking the Fourier transform we find $\widehat{Pu} = \hat h\hat u$. Requiring $P=P^2$ implies that $h=h\ast h$, which translates to the condition that $\hat h$ satisfy $\hat h=\hat h^2$. In this case, $\hat h:\mathbb R\to\{0,1\}$, almost everywhere, so $h$ is completely characterized by $S={\rm supp}\ \hat h$. Let us now examine the estimate. By Plancherel's theorem, the condition $\|u\|_{\epsilon}\le C\|u\|_{0}$ for $u \in L$ is equivalent to $\|\Lambda^{\epsilon/2}\hat u\|_0\le C\|\hat u\|_{0}$ for $u \in L$ where $\Lambda(\xi)=(1+|\xi|^2)$. Concretely, 
\[\int_{\mathbb R}d\xi\ (1+|\xi|^2)^\epsilon|\hat u(\xi)|^2\le C^2\int_{\mathbb R}d\xi\ |\hat u(\xi)|^2\quad (u\in L)\]
\noindent
or, involving $\hat h=\chi_S$, we can consider all $u\in L^2(\mathbb R)$,
\[\int_{\mathbb R}d\xi\ [C^2-(1+|\xi|^2)^\epsilon]\ \chi_S(\xi)\ |\hat u(\xi)|^2\ge 0\quad (u\in L^2(\mathbb R)).\]
\noindent
Clearly this implies that $S=\{\xi\in\mathbb R\mid C^2-(1+|\xi|^2)^\epsilon\ge0\}$. Notice that 
\[S=\{\xi\in\mathbb R\mid C^{2/\epsilon}-1\ge |\xi|^2 \}=\{\xi\in\mathbb R\mid (C^{t})^{2/\epsilon t}-1\ge |\xi|^2\}.\]
\noindent
Therefore, all $u\in L$ also satisfy $\|u\|_{\epsilon t}\le C^{t}\|u\|_{0} \ \forall t\in \mathbb R$, with the constants' growth identical to that in Lemma \ref{iter}. We cannot invoke \cite{KN,N} this time to obtain analyticity here as they assume compactness, but instead apply the Paley-Wiener theorem.  This provides that $u\in L$ are analytic as their Fourier transforms have compact support, ${\rm supp}\ \hat u\subset{\rm supp}\ \hat h=S$ for all $u\in L$. We can also compute the $G$-dimension of $L$. We see that $L={\rm im}\ \check\chi_S\ast\cdot$, where $\check{}$ means the inverse Fourier transform. Applying Plancherel's theorem again, we obtain $ \|h\|^2=\|\check\chi_S\|^2 = meas(S)=2\sqrt{C^{2/\epsilon}-1}$.}\end{example}

The following Paley-Wiener-type is from \cite{GHS}.

\begin{lemma}\label{ghspw} Let $L$ be a closed $\Gamma$-invariant
subspace in $L^2(M,E)$, $L\subset W^\epsilon$ for some $\epsilon>0$ and there
exists $C>0$ such that 
\[\| u\|_\epsilon\leq C\| u\|_0,\quad (u\in L) \] 
\noindent
Then $\dim_\Gamma L<\infty$.\end{lemma}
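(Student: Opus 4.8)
The plan is to reduce Lemma \ref{ghspw} to the already-established machinery of Lemmata \ref{kersmooth}, \ref{kerint} and Corollary \ref{theworks}, exploiting the presence of a discrete cocompact subgroup $\Gamma\subset G$ (in this statement $G=\Gamma$ is itself discrete, so the reduction is immediate and one works directly on $M$). First I would observe that since $L$ is closed and $\Gamma$-invariant, the orthogonal projection $P\colon L^2(M,E)\to L$ is a self-adjoint operator in $\mathcal B(L^2(M,E))^\Gamma$, so by the matrix-coefficient description preceding Lemma \ref{normkappa} it has a $\Gamma$-invariant Schwartz kernel $K_P$ descending to $\tfrac{M\times M}{\Gamma}$, and $\dim_\Gamma L = {\rm Tr}_\Gamma(P) = {\rm Tr}_\Gamma(P^*P) = \int_{\frac{M\times M}{\Gamma}}|K_P|^2$ by Lemma \ref{normkappa} (using $P=P^*=P^2$). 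Thus it suffices to show $K_P$ is bounded, i.e.\ that $P$ maps $L^2$ continuously into $L^\infty_{\rm loc}$, uniformly along orbits; then the integral over the \emph{compact} quotient $\tfrac{M\times M}{\Gamma}$ is finite and we are done.

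The key analytic input is the hypothesis $\|u\|_\epsilon\le C\|u\|_0$ for $u\in L$, which I would bootstrap to all Sobolev orders exactly as in Example \ref{reals}: applying $P$ again and using that $P$ is bounded on $L^2$ together with the interior elliptic-type iteration (the abstract analogue of Lemma \ref{iter}, which the excerpt invokes for the $G=\mathbb R$ case), one gets $\|u\|_{k\epsilon}\le C_k\|u\|_0$ for every integer $k$, hence $L\subset H^\infty_{\rm loc}(M,E)$, i.e.\ $\im P\subset H^\infty(M,E)$. Here I must be a little careful about whether the relevant regularity is the \emph{interior} iteration on $L$ itself or the full boundary estimate; but since the hypothesis is a clean $\epsilon$-gain on the closed subspace $L$ and iterating $P=P\circ P$ only reinvokes that same estimate, no boundary analysis is needed beyond what produced the $\epsilon$-estimate in the first place. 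Once $\im P\subset H^\infty(M,E)$, Lemma \ref{kersmooth} gives that $K_P$ is smooth and Lemma \ref{kerint} (extended to bundles, as remarked after Corollary \ref{theworks}) gives $K_P\in L^2(\tfrac{M\times M}{\Gamma})$, whence Corollary \ref{theworks} yields ${\rm Tr}_\Gamma(P)<\infty$, that is, $\dim_\Gamma L<\infty$.

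For the general statement where $\Gamma$ appears as a discrete cocompact subgroup of a larger Lie group $G$, the one extra point is that right-$\Gamma$-invariance of $L$ together with the $\epsilon$-estimate still furnishes a $\Gamma$-invariant projection with kernel on the compact quotient $\tfrac{M\times M}{\Gamma}$; the cocompactness of $\Gamma$ is what makes that quotient compact and the Sobolev spaces choice-independent, so the $L^2$-to-$L^\infty$ bound on $K_P$ again integrates to something finite. I expect the main obstacle — such as it is — to be purely bookkeeping: making precise the iteration $\|u\|_{k\epsilon}\le C_k\|u\|_0$ for noninteger multiples and across bundle coordinates, and checking that the constants do not blow up faster than is compatible with $K_P\in L^2$; but since we only need \emph{some} finite Sobolev order large enough for the Sobolev embedding $H^s\hookrightarrow C^0$ (by Corollary \ref{applied}'s style of argument), even a crude bound $\|u\|_{k\epsilon}\le C_k\|u\|_0$ with any growth in $C_k$ suffices, so there is no genuine difficulty. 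In short: closedness and invariance give a $\Gamma$-invariant projection; the $\epsilon$-estimate iterated gives $H^\infty$-regularity of its range; smoothness plus cocompactness give an $L^2$ kernel on a compact quotient; and Corollary \ref{theworks} closes the argument.
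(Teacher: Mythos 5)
Your argument has a genuine gap at its central step: the claim that the hypothesis $\|u\|_\epsilon\le C\|u\|_0$ on $L$ can be ``iterated'' to give $\|u\|_{k\epsilon}\le C_k\|u\|_0$ for all $k$, hence $\im P\subset H^\infty$. Writing $P=P\circ P$ and reapplying the estimate gains nothing: the hypothesis only bounds the $W^\epsilon$-norm of elements of $L$ by their $L^2$-norm, and applying it to $Pu\in L$ returns the same $\epsilon$-gain, not a $2\epsilon$-gain. The bootstrap in Lemma \ref{iter} and Example \ref{reals} works only because there the projection is a group convolution $L_h$ commuting with a \emph{biinvariant} Laplacian, so that $\Lambda^\epsilon L_h=L_h\Lambda^\epsilon$ and $h'=h'\ast h$; a general $\Gamma$-invariant projection on a covering does not commute with the Laplacian defining the Sobolev scale. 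Indeed the paper's own remark after this lemma stresses that the author could \emph{not} establish ``$\epsilon$-gain implies finite dimension'' for continuous groups except via that biinvariance, which is precisely an admission that the self-improvement you invoke is unavailable in general. (A concrete obstruction: on $M=\mathbb R$, $\Gamma=\mathbb Z$, the $\mathbb Z$-invariant space generated by a single $h$ with $\hat h$ chosen so that $\sum_k(1+|\xi_0+2\pi k|^2)^\epsilon|\hat h(\xi_0+2\pi k)|^2\lesssim\sum_k|\hat h(\xi_0+2\pi k)|^2$ uniformly in $\xi_0$ while $h\notin W^{2\epsilon}$ satisfies the $\epsilon$-estimate but not the $2\epsilon$-estimate.) A secondary slip: $\tfrac{M\times M}{\Gamma}$ is \emph{not} compact (it fibers over the compact $M/\Gamma$ with noncompact fiber $M$), so boundedness of $K_P$ alone would not make $\int|K_P|^2$ finite; Lemma \ref{kerint} needs the full $L^2$-in-one-variable argument.

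The paper's proof avoids regularity bootstrapping entirely. It covers $M$ by $\Gamma$-translates of finitely many balls $B_k$, picks orthonormal systems $\psi_j^{(k)}$ in each $L^2(B_k,E)$, and defines $P_N:L\to L^2\Gamma\otimes\mathbb C^{mN}$ by pairing $u$ against the first $N$ translated basis elements on each ball. A local, quantitative Rellich-type lemma on the compact pieces $B_k$ gives $\|u\|_{0,\gamma B_k}\le\delta_N\|u\|_{\epsilon,\gamma B_k}$ with $\delta_N\to0$ whenever $u$ is orthogonal to the first $N$ functions; summing over $k$ and $\gamma$ and invoking the hypothesis $\|u\|_\epsilon\le C\|u\|_0$ forces $u=0$, so $P_N$ is injective for large $N$ and $\dim_\Gamma L\le mN<\infty$. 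This uses only the single $\epsilon$-gain, which is exactly the point of the lemma; you should adopt that route rather than trying to reach $H^\infty$.
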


To prove this lemma \cite{GHS} uses the following lemma about estimates of Sobolev norms on compact manifolds with boundary.

\begin{lemma} Let $X$ be a compact Riemannian manifold, possibly with a boundary. Let $E$
be a (complex) vector bundle with an hermitian metric over $\bar X$.
Denote by $\langle\cdot,\cdot\rangle$ the induced Hermitian inner product in the Hilbert
space $L^2(X,E)$ of square-integrable sections of $E$ over $X$. Denote by
$W^s=W^s(X,E)$ the corresponding Sobolev  space of sections of $E$ over $X$,
$\|\cdot\|_s$ the norm in this space. Let us choose
a complete orthonormal system $\{\psi_j;\ j=1,2,\dots\}$ in $L^2(X,E)$.
Then for all
$\epsilon>0$ and $\delta>0$ there exists an integer $N>0$ such that
\begin{equation}\label{epsgain}\| u\|_0\leq\delta\| u\|_\epsilon\ \hbox{provided}\ u\in W^\epsilon\
\hbox{and}\ \langle u,\psi_j\rangle=0,\;j=1,\dots,N.\end{equation}
\end{lemma}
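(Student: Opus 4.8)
The plan is to reduce the estimate \eqref{epsgain} to the classical compactness of the Sobolev embedding $W^\epsilon(X,E)\hookrightarrow L^2(X,E)$, via a standard argument by contradiction. First I would observe that, since $\bar X$ is compact and $E$ carries a Hermitian metric, Rellich's lemma applies on manifolds with boundary as well: the inclusion $W^\epsilon(X,E)\to L^2(X,E)$ is a compact operator for every $\epsilon>0$. (On a manifold with boundary one works with the space $W^\epsilon(\bar X,E)=H^\epsilon(\bar M)$ described earlier, or equivalently uses a finite cover by boundary and interior charts and the Euclidean Rellich theorem together with a partition of unity.) The finite codimension condition ``$\langle u,\psi_j\rangle=0$ for $j=1,\dots,N$'' just says $u$ lies in the orthogonal complement $L^2_N$ of the span of $\psi_1,\dots,\psi_N$ in $L^2(X,E)$; as $N\to\infty$ these complements shrink to $\{0\}$.

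The core step is the contradiction argument. Suppose \eqref{epsgain} fails for some $\epsilon>0$ and some $\delta>0$. Then for every $N$ there is $u_N\in W^\epsilon$ with $\langle u_N,\psi_j\rangle=0$ for $j\le N$ and $\|u_N\|_0 > \delta\|u_N\|_\epsilon$. Normalizing so that $\|u_N\|_\epsilon=1$, we get a bounded sequence in $W^\epsilon$ with $\|u_N\|_0>\delta$ for all $N$. By reflexivity of $W^\epsilon$, pass to a subsequence $u_N\rightharpoonup u$ weakly in $W^\epsilon$; by Rellich the embedding into $L^2$ is compact, so $u_N\to u$ strongly in $L^2$, hence $\|u\|_0\ge\delta>0$, so $u\neq 0$. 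On the other hand, for any fixed $j$, once $N\ge j$ we have $\langle u_N,\psi_j\rangle=0$, and $L^2$-convergence gives $\langle u,\psi_j\rangle=0$; thus $u\perp\psi_j$ for every $j$. Since $\{\psi_j\}$ is a complete orthonormal system in $L^2(X,E)$, this forces $u=0$, contradicting $\|u\|_0\ge\delta>0$. Therefore \eqref{epsgain} must hold.

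The only delicate point, and the one I would spend the most care on, is making sure the weak-compactness/Rellich input is legitimate \emph{on a manifold with boundary}: one must either invoke the form of $H^\epsilon(\bar M)$ used earlier in the paper (elements of $H^{-\epsilon}(\tilde M)$ supported in $\bar M$, after Remark 12.5 of \cite{LM}) together with compactness of the embedding for that scale of spaces, or localize with a partition of unity subordinate to a finite atlas of special boundary charts and ordinary interior charts and appeal to the Euclidean half-space Rellich theorem chart by chart. Everything else—the normalization, the extraction of a weakly convergent subsequence, the passage of the orthogonality relations to the limit, and the use of completeness of $\{\psi_j\}$—is routine. Once \eqref{epsgain} is in hand, Lemma \ref{ghspw} follows by the Paley–Wiener/iteration mechanism already indicated in Example \ref{reals}, but that is a separate step not needed for the statement at hand.
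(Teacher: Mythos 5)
Your proof is correct. The paper itself does not prove this lemma --- it simply quotes it from \cite{GHS} --- and your argument (normalize $\|u_N\|_\epsilon=1$, extract a weakly convergent subsequence, use the compactness of the embedding $W^\epsilon(X,E)\hookrightarrow L^2(X,E)$ on the compact manifold with boundary to get strong $L^2$ convergence, pass the orthogonality relations $\langle u,\psi_j\rangle=0$ to the limit, and invoke completeness of $\{\psi_j\}$ to force $u=0$, contradicting $\|u\|_0\geq\delta$) is exactly the standard contradiction-via-Rellich proof used there, with the boundary subtlety correctly flagged and resolved.
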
 
{\it Proof of Lemma \ref{ghspw}}. Let us choose a $\Gamma$-invariant covering
of $M$ by balls $\gamma B_k,\;k=1,\dots,m,\;\gamma\in\Gamma$, so that all the balls
have smooth boundary (e.g. have sufficiently small radii). Let us choose a
complete orthonormal system $\{\psi_j^{(k)}\;j=1,2,\dots\}$ in
$L^2(B_k,E)$ for every $k=1, \dots m$.  Then
$\{(\gamma^{-1})^\ast\psi_j^{(k)},\;j=1,2,\dots\}$ will be an orthonormal
system in $\gamma B_k$  (here we identify the element $\gamma$ with the corresponding
transformation of $M$).

   Given the subspace $L$ satisfying the conditions in the Lemma let us define a map
\[P_N:L\longrightarrow L^2\Gamma \otimes\mathbb C^{mN}\]
\[u\mapsto \{\langle u,(\gamma^{-1})^\ast\psi_j^{(k)}\rangle,\;j=1,2,\dots,N;\;k=1,\dots,m;\;
\gamma\in\Gamma\}\;.\]
Since $\dim_\Gamma L^2\Gamma\otimes\mathbb C^{mN}=mN<\infty$ the desired result will
follow if we prove that $P_N$ is injective for large $N$.
Assume that $u\in L$ and $P_Nu=0$.
Using Lemma 1.6 we get then
\[\| u\|_{0,\gamma B_k}^2\leq\delta_N^2\| u\|_{\epsilon,\gamma B_k}^2,\ \
k=1,\dots,m;\ \gamma\in\Gamma,\]
where $\delta_N\to 0$ as $N\to\infty$ and $\| \cdot\|_{s,\gamma B_k}$ means the
norm in the Sobolev space $W^s$ over the ball $\gamma B_k$.
  Summing over all $k$ and $\gamma$ we get
\[\| u\|_0^2\leq C_1^2\delta_N^2\| u\|_\epsilon^2\;,\]
where $C_1>0$ does not depend on $N$. This clearly contradicts (1.6) unless
$u=0$. $\square$

\noindent
Clearly this can be extended slightly using the results in \cite{Ad}.

\begin{rem} It is not necessary to require that $L$ is closed in $L^2$
in Proposition 1.5. For any $L$ satisfying (1.6) we can consider its
closure $\bar L$ in $L^2$. Then obviously $\bar L\subset W^\epsilon$ and Proposition 1.5
implies that $\dim_\Gamma\bar L<\infty$.

It is important to note that for the discrete group, the $\epsilon$-gain in \eqref{epsgain} is sufficient to obtain the finite $\Gamma$-dimensionality of $L$. Consider $\Gamma=\mathbb Z\subset\mathbb R=G$ and a function $h\in C^\infty_c(0,1)$. Clearly $\dim_\Gamma\langle h\rangle=1$ and an estimate like \eqref{epsgain} certainly holds. Taking the bigger group, the Paley-Wiener theorem implies that $\dim_G\langle h\rangle=\infty$, each of the generated spaces $\langle h\rangle$ according to the appropriate groups, {\it cf.} \cite{P2}. Sect. 2. This should be compared to the following fact. We have not been able to derive a similar property ($\epsilon$-gain implies finite-dimensionality) for continuous groups except for those that are central extensions of compact groups or reductive groups. Since all such groups have discrete cocompact subgroups \cite{M} we gain nothing new. \end{rem}

\subsection{Groups with Biinvariant Laplacians}

It is well-known that Abelian groups and compact semisimple groups all have Laplacians (second-order elliptic differential operators) that commute with left- and right-translations, \cite{M}.   
   
\begin{lemma}\label{iter} Let $G$ be a compact, semisimple Lie group, and suppose that $L$ is a right-invariant subspace of $L^2(G)$ satisfying
\[\|u\|_1\lesssim\|u\|_0 \quad (u\in L).\]
\noindent
It follows that $L$ is finite $G$-dimensional, finite-dimensional, and analytic.\end{lemma}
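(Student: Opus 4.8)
The plan is to observe that the hypothesis confines $L$ to a bounded portion of the spectrum of the biinvariant Laplacian of $G$, and that on the compact manifold $G$ such a spectral subspace is automatically finite-dimensional and analytic; finite $G$-dimensionality is then free.

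First I would fix a biinvariant metric on $G$ and let $\Delta\ge 0$ be its Laplace--Beltrami operator; for $G$ compact semisimple this is (a positive combination of) the Casimir and commutes with both left and right translations, {\it cf.} \cite{M}. With the $G$-invariant Sobolev norm of Section 2 one has $\|u\|_1^2\approx\|u\|_0^2+\langle\Delta u,u\rangle$, since the two ways of measuring one derivative are equivalent on the compact manifold $G$. Thus the hypothesis $\|u\|_1\lesssim\|u\|_0$ on $L$ forces $L\subset H^1(G)$ and produces a constant $R\ge 0$ with $\langle\Delta u,u\rangle\le R\|u\|_0^2$ for all $u\in L$.

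Next, since $\Delta$ commutes with right translations, so do its spectral projections $E_{[0,t]}$, hence each $E_{[0,t]}$ preserves the closed right-invariant subspace $\bar L$ (I first pass to the $L^2$-closure of $L$, which again satisfies the hypothesis and which will turn out to equal $L$). If $u\in\bar L$ and $v=E_{(t,\infty)}u$ for some $t>R$, then $v\in\bar L$, so on one hand $\langle\Delta v,v\rangle\le R\|v\|_0^2$, while on the other hand $\langle\Delta v,v\rangle\ge t\|v\|_0^2$; since $t>R$ this forces $v=0$. Letting $t\downarrow R$ gives $E_{(R,\infty)}\bar L=0$, i.e. $\bar L\subset\im E_{[0,R]}$. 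On the compact manifold $G$ the spectrum of $\Delta$ is discrete with finite multiplicities, so $\im E_{[0,R]}$ is finite-dimensional; and each of its elements, being a finite sum of eigenfunctions of the elliptic operator $\Delta$ — which has analytic coefficients on the real-analytic manifold $G$ — is analytic (equivalently, it is a finite sum of matrix coefficients of irreducible representations, via Peter--Weyl). Hence $\bar L$ is finite-dimensional and analytic, in particular closed, so $L=\bar L$, and $L$ is finite-dimensional and analytic.

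Finally, finite-dimensionality yields finite $G$-dimension at once: the orthogonal projection $P_L$ onto $L$ has finite rank, so $\im P_L\subset C^\infty(G)\subset H^\infty(G)$, and Corollary \ref{theworks} gives $\dim_G L={\rm Tr}_G(P_L)<\infty$ (equivalently, $P_L=L_h$ with $h$ a finite linear combination of characters, whence $\dim_G L={\rm tr}_G(L_h)=h(e)<\infty$). I expect the only point requiring genuine care to be the bookkeeping in the third paragraph — recognizing that the hypothesis is really a bound on the $\Delta$-spectrum and that the relevant spectral subspace is simultaneously finite-dimensional and analytic; there is no serious obstacle here, since the compactness of $G$ does all the heavy lifting.
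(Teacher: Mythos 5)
Your argument is correct in substance but follows a genuinely different route from the paper's. The paper writes $L=\im L_h$ with $h=h\ast h$ and bootstraps: abbreviating $\sqrt{1+\Delta}=\ '$ and using biinvariance to get $h'=h'\ast h=h\ast h'$, it iterates to $\|h\ast u\|_k\le C^k\|h\ast u\|_0$, concludes $\im L_h\subset H^\infty(G)$, obtains $\dim_G L=h(e)<\infty$ from smoothness of the kernel, finite-dimensionality from Rellich, and analyticity from Nelson's analytic-vectors theorem via the exponential growth $C^k$ of the constants. You instead read the hypothesis as a bound on the $\Delta$-spectrum and trap $\bar L$ inside $\im E_{[0,R]}$, which on the compact manifold $G$ is finite-dimensional and consists of finite sums of analytic eigenfunctions. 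Your version is shorter and avoids both Rellich and Nelson, at the price of leaning entirely on discreteness of the spectrum, i.e.\ on compactness; the paper's iteration is the pattern that recurs elsewhere in the paper (Example \ref{reals}, the remark on fractional gains) and is the form of the argument with any hope of surviving outside the compact setting.

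One inference needs repair. You justify $E_{[0,t]}\bar L\subset\bar L$ by saying that $E_{[0,t]}$ commutes with right translations and $\bar L$ is right-invariant. That implication is false for nonabelian $G$: the operators commuting with the right regular representation form the noncommutative algebra $\mathcal L_G$ of left convolutions, and such an operator preserves a given closed right-invariant subspace only if it commutes with the corresponding projection $P_{\bar L}\in\mathcal L_G$ --- a left translation $L_s$, for instance, commutes with all right translations yet carries $\im L_h$ to $\im L_{\delta_s\ast h}$. What saves you is precisely the biinvariance you already invoked: $E_{[0,t]}$ also commutes with all left translations, hence with every element of $\mathcal L_G$, in particular with $P_{\bar L}$, and then $E_{[0,t]}\bar L\subset\bar L$ follows. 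With that sentence corrected the proof goes through. (A small side remark: in your final parenthesis, $h$ is a finite combination of matrix coefficients of the representations occurring in $\im E_{[0,R]}$, not necessarily of characters, unless $\bar L$ happens to be two-sided invariant; this does not affect the conclusion ${\rm tr}_G(L_h)=h(e)<\infty$.)
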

\begin{proof}  Since $G$ is compact, semisimple, it has a biinvariant elliptic Laplacian, $\Delta$. Since $L$ is right-invariant, $L$ can be written as the image of left-convolution by some distribution $h$ so that $L_h = L_h^* = L_h^2$. Abbreviate the convolution $L_h = h\ast\cdot$ and the invariant operator $\sqrt{1+\Delta } = \ ' $. 

Using the invariance of $\Delta$ and the identity $h=h\ast h$, it is easy to see that $h'= h' \ast h= h \ast h'$. Furthermore, $h'' = h' \ast h'$, and iterating, $h^{(k)} = (h')^{ \ast k}$. Now, using assumptions, identities, and the definition of the Sobolev norms, 
\[\|h'\ast h\ast u\|_0=\|h'\ast u\|_0=\|(h\ast u)'\|_0\stackrel {\rm def} = \|h\ast u\|_1\le C\|h\ast u\|_0 \quad (u\in L^2(G)).\]
\noindent
This means that the operator $L\ni u\mapsto h'\ast u$ has norm less than or equal to $C$. Similarly (and perhaps surprisingly), the previous line shows that $\|h'\ast u\|_0\le C\|h\ast u\|_0$ for all $u\in L^2(G)$. So $h'\ast u\in L^2(G)$, and this implies that $h'=h'\ast h$ acts on {\it it} in a bounded way:
\[\|h \ast u \|_{2} = \|h'' \ast u \|_{0} = \|h'\ast h'\ast u \|_{0} \le C \|h' \ast u \|_{0} \le C^2 \| h\ast u \|_{0} \quad (u\in L^2(G)).\]

Iterating this process shows that $L_{h} u \in H^{\infty}(G)$ and so is smooth for all $u\in L^2(G)$.  Therefore $h$ is smooth by Lemma \ref{kersmooth} and ${\rm dim}_G \ {\rm Im}\ L_{h}=\int_G |h|^2 = h(0) < \infty$. The finite-dimensionality of $L$ is just Rellich. 
  Noting the exponential growth of the constants: $\|L_{h}u\|_s \le C^s\|L_h u\|_0$ and the fact that the biinvariant Laplacian must have analytic coefficients (coming as is does from invariant vector fields) we conclude that $u \in {\rm im}\ L_{h}$ are in fact analytic, \cite{N}.\end{proof}
\begin{rem}{\rm With no change, the argument above works for a fractional ({\it i.e.} any $\epsilon>0$) gain $\|L_h u\|_\epsilon\le C\|L_h u\|_0$, but it fails if one drops the assumption that the Laplacian be biinvariant.}\end{rem}
\noindent 
The lemma has an obvious corollary. 
\begin{corollary}\label{weako} If $L$ satisfies the hypotheses of the previous lemma and $u\in L$ is nonzero, then $\{x\in M\mid u(x) = 0\}$ has no accumulation point. \end{corollary}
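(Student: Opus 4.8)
The plan is to extract everything from the analyticity already proved in Lemma~\ref{iter}. Fix a nonzero $u\in L$. By that lemma we may write $u=L_h u=h\ast u$, where $h$ is the convolution kernel of the orthogonal projection onto $L$ and $h$ is smooth; moreover the iteration carried out in the proof of Lemma~\ref{iter}, combined with the fact that the biinvariant Laplacian $\Delta$ on $G$ has real-analytic coefficients (it is assembled from left-invariant, hence analytic, vector fields), yields via \cite{N} that $u$ is in fact \emph{real-analytic} on $G$. This is the only substantive input: it is precisely what the Remark following Lemma~\ref{iter} emphasizes cannot be dispensed with, since without biinvariance of $\Delta$ the iteration gives only $C^\infty$ regularity, which would not suffice here.

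Granting real-analyticity of $u$, the statement is the principle of analytic continuation. Since $G$ is compact it has finitely many connected components; as $u\not\equiv 0$, there is at least one component $G_0$ on which $u$ does not vanish identically, and it is enough to treat each component separately (the zero set of $u$ is the disjoint union of its parts in the various components, and $u$ vanishes identically on the remaining ones, which contribute nothing to an accumulation). On $G_0$, suppose the zero set $Z=\{x\in G_0\mid u(x)=0\}$ had an accumulation point $p$. Then, working in a chart about $p$ in which $u$ is analytic, $u$ vanishes identically near $p$, and connectedness of $G_0$ propagates this to $u\equiv 0$ on all of $G_0$, contradicting the choice of $G_0$. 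Hence $Z$ has no accumulation point; and since $G$ is compact, $Z$ is in fact finite. (The identification of $G$, or $M$ in the statement, with the underlying manifold on which the elements of $L\subset L^2(G)$ live is understood here.)

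The main obstacle is therefore Step~1 rather than Step~2: one must produce genuine analyticity — not merely smoothness — of the elements of $L$, and this rests entirely on the biinvariance of the Laplacian on a compact semisimple group, the feature distinguishing this corollary from, say, Example~\ref{reals}, where analyticity came instead from Paley--Wiener. Once analyticity is available, the remaining step is the standard argument by analytic continuation, and the only bookkeeping point is the reduction to the connected components of $G$, which is harmless because $G$ is compact. No estimates beyond those of Lemma~\ref{iter} (and the smoothness of $h$ as in Lemma~\ref{kersmooth}) are needed, so this is indeed the ``obvious'' corollary referred to in the text.
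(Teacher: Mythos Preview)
There is a genuine gap in your Step~2. The identity principle you invoke --- that a real-analytic function vanishing on a set with an accumulation point must vanish identically nearby --- is a one-variable fact; it fails for real-analytic functions of several variables. On a real manifold of dimension $\geq 2$, the zero set of a nonzero real-analytic function is a proper analytic subvariety: it is closed, nowhere dense, and has measure zero, but it is typically positive-dimensional, and then every point of it is an accumulation point. A compact semisimple Lie group has real dimension at least $3$, so this objection applies. Concretely, take $G=SU(2)$ and $u(g)=g_{12}$, the off-diagonal matrix coefficient of the standard representation; this $u$ lies in a finite-dimensional right-invariant subspace $L\subset L^2(G)$ satisfying the hypotheses of Lemma~\ref{iter}, yet $\{u=0\}$ is the diagonal maximal torus, a circle. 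The inference ``accumulation point of zeros $\Rightarrow$ $u\equiv 0$ near $p$'' therefore does not go through.

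Your Step~1, extracting real-analyticity from Lemma~\ref{iter} via \cite{N}, is exactly what the paper intends by calling this an ``obvious'' corollary, and the paper offers no further argument. But analyticity by itself yields only that the zero set is thin, not discrete. Either the statement is meant to be read in a one-dimensional setting (as in Example~\ref{reals}, where Paley--Wiener does give discreteness), or ``accumulation point'' should be weakened to ``interior point''; as written, the corollary does not follow from analyticity alone, and your proof does not close the gap.
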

Corollary \ref{weako} is reminiscent of the Paley-Wiener theorem in that the small support of the Fourier transform $\hat u$ translates to analyticity (implying support almost everywhere) of the function $u$. See \cite{P2} for more about the Paley-Wiener theorem in this setting.
\begin{rem}Mimicking \cite{A, GHS} it is possible to make a correspondence between compactness of an operator on a compact space and the $\Gamma$-Fredholm property of the lift of that operator on a regular covering.\end{rem}

\section{Acknowledgments}

The author thanks M. Engli\v s, G. Folland, and E. Straube for helpful conversations.

\end{document}